\documentclass[11pt]{article}

\usepackage{amsmath,amsthm,amssymb}
\usepackage{graphicx}
\usepackage{hyperref}
\usepackage[ansinew]{inputenc}
\usepackage[mathcal]{euscript}
 \usepackage{comment}
\usepackage{xcolor}
\usepackage[all]{xy}
\usepackage{blkarray}
\usepackage{epsf}
\usepackage{shuffle}
\usepackage{mathtools}
\usepackage{pst-all}
\usepackage{mathrsfs}
\usepackage{float}    
\newtheorem{theorem}{Theorem}[section]
\newtheorem{proposition}[theorem]{Proposition}
\newtheorem{lemma}[theorem]{Lemma}
\newtheorem{corollary}[theorem]{Corollary}

\theoremstyle{definition}
\newtheorem{definition}[theorem]{Definition}
\newtheorem{example}[theorem]{Example}

\newtheorem{convention}[theorem]{Convention}



\title{Polynomial and spectra factorization of graphs obtained by iteration the operad of generalized graph composition.}
\author{Jean Liendo}
\date{\today}

\begin{document}

\maketitle

\begin{abstract}
The generalized composition graph is used by Cardoso and some researchers for factorization the adjacency spectrum and Laplacian of a simple graph. Because the generalized composition graph is an example of a set-theoretic linear operad, this operation
can be iterated at more than one level, where the complex language of partition refinement
in the iteration is represented in terms of Schröder trees, which allows us
to generalize the factorization of the adjacency spectrum and Laplacian of a simple graph presented
by Cardoso in terms of Schröder trees and colorings over the edges of a graph. Cardoso's technique has been generalized by other authors for the universal adjacency matrix of a graph. This work also presents generalized factorizations in terms of Schröder trees and colorings on the edges of a graph for the universal adjacency spectrum, the characteristic polynomial of the universal adjacency matrix, and the generalized characteristic polynomial of a graph.
\end{abstract}

\vspace{0.5cm} 
\noindent \textbf{Keywords:} Operads, Graph eigenvalues, Universal adjacency matrix, Species.
\vspace{0.5cm}

\section{Preliminaries.
Fiedler's Lemma and its generalization as an application to the computation of the adjacency and Laplacian spectrum.}

In this work, all graphs are assumed to be finite and simple. If $g$ is a graph, we denote by $V(g)$ and $E(g)$ the sets of vertices and edges of $g$, respectively. Given a vertex $v \in V(g)$, the \emph{valency} $val(v)$ of $v$ is the number of edges of $g$ adjacent to $v$. A \emph{$k$-regular graph} is a graph $g$ such that $val(v)=k$, for every $v \in V(g)$. In this case, we say that $k=\mathrm{reg}(g)$ is the \emph{regularity} of $g$.

\medbreak

Suppose that the set of vertices of a graph $g$ is endowed with a total order. In this case, we write $V(g)=\{v_1, v_2, \hdots, v_n\}$, where $v_i < v_j$ if $i<j$. Three important $n\times n$ integer matrices can be associated to the graph $g$: its \emph{adjacency}, \emph{degree} and \emph{Laplacian} matrices $A(g)$, $D(g)$ and $L(g)$, respectively, defined as
\begin{equation}
A(g)_{i,j}:= \left\lbrace \begin{array}{cl}
		0,&\text{ if } \{v_i, v_j\}\not\in E(g)\\
		1,&\text{ if } \{v_i, v_j\}\in E(g)
	\end{array}\right. \quad , \quad
L(g)_{i,j}:=
\left\lbrace \begin{array}{cl}
		0,&\text{ if } \{v_i, v_j\}\not\in E(g)\\
		-1,&\text{ if } \{v_i, v_j\}\in E(g)\\
  val(v_i), &\text{ if }  v_i=v_j
	\end{array}\right.
\end{equation}
and $D(g)_{i,i}:=val(v_i)$, and 0 otherwise. By definition, $L(g)= D(g)-A(g)$, for every graph $g$.

\medbreak

If $A$ is a square matrix, $\sigma(A)$ will denote the \emph{spectrum} of $A$, which consists of the multiset formed by the eigenvalues of $A$. Given a graph $g$, the spectrum of its adjacency and Laplacian matrices of $g$ are denoted by $\sigma(A(g))$ and $\sigma(L(g))$, respectively.
\begin{convention}
The following convention will be used throughout this work. Given a set $X$, the \emph{free monoid} on $X$ (also called the \emph{Kleene star} on $X$) is the monoid whose elements are all the finite sequences of elements from $X$. Such finite sequences are called \emph{words} of the monoid. A \emph{letter} is a word with only one element. There is a unique word without elements, called \emph{empty word}. The monoidal operation in $X^*$ is the \emph{concatenation} of words, with the empty word as the unit for this operation. Now, if $\lambda_1, \lambda_2,..., \lambda_k$ are the distinct eigenvalues of a square matrix $A$ with respective multiplicities $m_1, m_2,..., m_k$, we will represent the multiset $\sigma(A)$ as a commutative word in the free monoid $\mathbb{R}^*$:
\begin{equation}
\sigma(A)=\lambda^{m_1}_1\lambda^{m_2}_2\cdots\lambda^{m_k}_k \in \mathbb{R}^*.
\end{equation}

\

Also, for every $1 \leq i \leq k$, we will write $\displaystyle \frac{\sigma(A)}{\lambda_i}$ for the word $\sigma(A)$ without one factor $\lambda_i$.
\end{convention}
\medbreak

\begin{example}
Consider the graph $g$ in the Figure \ref{exagraph}:
\begin{figure}[h]
    \centering
    \includegraphics[scale=0.4]{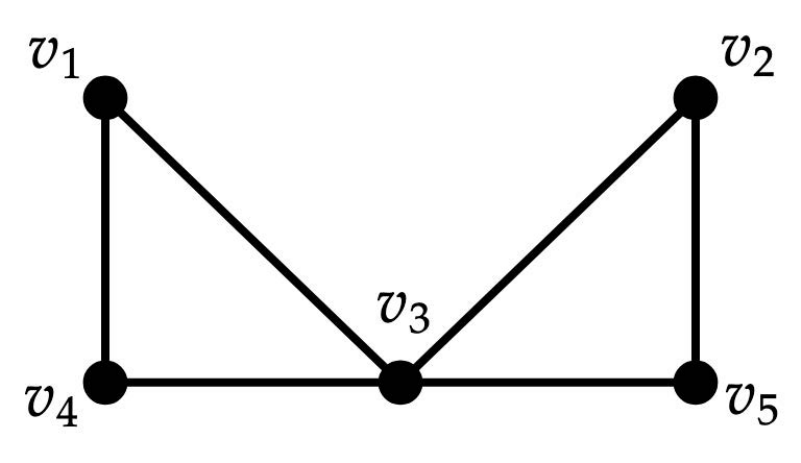}
    \caption{Graph $g$}
    \label{exagraph}
\end{figure}
from the corresponding adjacency and Laplacian matrices,
\[A(g)=
\begin{bmatrix}
0&0&1&1&0\\
0&0&1&0&1\\
1&1&0&1&1\\
1&0&1&0&0\\
0&1&1&0&0
\end{bmatrix}
\quad , \quad  L(g)=
\begin{bmatrix}
2&0&-1&-1&0\\
0&2&-1&0&-1\\
-1&-1&4&-1&-1\\
-1&0&-1&2&0\\
0&-1&-1&0&2
\end{bmatrix}\]
the resulting spectra are
\[\sigma(A(g))= {\big (}(1+\sqrt{17})/2{\big)}{\big (}(1-\sqrt{17})/2{\big)}(-1)^2\,1 \quad \text{ and } \quad \sigma(L(g)) = 0\, 1\, 3^2 \, 5.\]
In particular,
\[\frac{\sigma(A(g))}{(-1)}= {\big (}(1+\sqrt{17})/2{\big)}{\big (}(1-\sqrt{17})/2{\big)}(-1)\,1 \quad \text{ and } \quad \frac{\sigma(L(g))}{0} = 1\, 3^2 \, 5.\]
\end{example}

\subsection{Generalized Composition of Graphs}

Once a certain graph operation is defined, the next step is to study whether this operation decomposes the adjacency and Laplacian matrices into submatrices defined by the subgraphs that are part of the operation, and ultimately, to determine if there is a factorization of the adjacency or Laplacian spectrum, such is the case of \textit{the generalized composition of graphs} introduced by Schwenk in \cite{Schwenk} and reformulated by Cardoso in \cite{Cardoso2013}.

\
Let $\ell=(u_1, u_2,..., u_n)$ a linear order. We say that $\pi=(\ell_1, \ell_2,..., \ell_k)=(\ell_i)^k_{i=1}$ is a \emph{segmented partition or strong composition} of $\ell$ if each $\ell_j$ is a segment of $\ell$, i.e., $\ell_1={u_1, u_2,... u_{r_1}}$, $\ell_2={u_{r_1+1},..., u_{r_2}}$, ..., $\ell_k={u_{r_{k-1}+1},..., u_n}$. Also, $\mathit{a}=(g_{\ell_1}, g_{\ell_2},...,g_{\ell_k})=(g_{\ell_i})^k_{i=1}$ is a \emph{segmented or ordered assembly} of simple graphs over $\pi$, where for each $j=1,2,...,k$, $g_{\ell_j}$ is a simple graph whose vertex set is the segment $\ell_j$ of $\ell$. If $h$ is any simple graph whose vertex set is the segmented partition $\pi$ of $\ell$, the operation \textit{generalized composition of graphs}, $\displaystyle{\bigvee_h a}$ is defined as the graph $g$ whose vertex set is $\ell$ and edge set is given by:

\[\displaystyle{E(g)=\left(\bigcup_{j=1}^kE(g_{\ell_j})\right)\cup\left(\bigcup_{\{\ell_r, \ell_s\}\in E(H)}\{\{x,y\}:x\in \ell_r, y\in \ell_s\}\right)}\]

\

As illustrated in Figure \ref{f1}, $g$ is the result of connecting each vertex of $g_{\ell_r}$ with each vertex of $g_{\ell_s}$ whenever $\ell_r$ and $\ell_s$ are connected in $h$. In particular, if $k = 2$, we have the join operation $g_{\ell_1 }\vee g_{\ell_2}$. The generalized composition $g=\displaystyle{\bigvee_h a}$ further decomposes the graph $g$ into a pair $(a, h)$, where $a$ is interpreted as the internal ordered assembly of $g$ and $h$ is the external graph whose vertices are the components of $a$. In other words, the pair $(a, h)$ can be understood as the graph consisting of the assembly $a$ within the graph $h$. In practice, we draw the graph $h$ and replace the vertex $\ell_j$ in $h$ with the internal graph $g_{\ell_j}$. Finally, the generalized composition of graphs can be viewed functionally as follows:

\[(a, h) \longmapsto \displaystyle{\bigvee_h a}\]
\begin{figure}[h]
    \centering
    \includegraphics[width=0.9\textwidth]{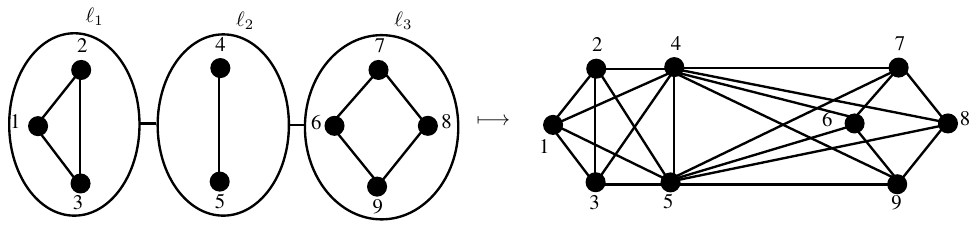}
    \caption{Example of generalized composition of graphs}
    \label{f1}
\end{figure}

The adjacency and Laplacian matrices of $g=\displaystyle{\bigvee_h a}$ are represented by the segments of $\pi$, the adjacency and Laplacian matrices of the graphs in $a$, and the edges of the graph $h$. In fact, we have
\begin{proposition}\label{Matriz join}
Let $a=(g_{\ell_j})^k_{j=1}$ be an assembly of simple graphs with segmented partition $\pi=(\ell_j)^k_{j=1}$ such that $|\ell_j|=n_j$. Let $h$ be a simple graph with vertex set $\pi$. For $j\in \{1, 2,..., k\}$, let $N_j=\displaystyle{\sum_{\{\ell_s,\ell_j\}\in E(h)} n_s}$ the external valence of $\ell_j$. If $g=\displaystyle{\bigvee_h a}$, then
\begin{enumerate}
\item The adjacency matrix $A(g)$ is represented over the segments of $\pi$ as follows:
    \begin{equation}\label{Adyacencia particionada}
    (A(g))_{\ell_i\ell_j}=\delta_{ij}A(g_{\ell_j}) + (A(h))_{ij}1_{n_i\times n_j}
    \end{equation}

\item The Laplacian matrix $L(g)$ is represented over the segments of $\pi$ as follows:

\begin{equation}\label{Laplaciana particionada}
    (L(g))_{\ell_i\ell_j}=\delta_{ij}(L(g_{\ell_j})+ N_jI_{n_j}) - (A(h))_{ij}1_{n_i\times n_j}
    \end{equation}
\end{enumerate}

In the above equations, $1_{n_i\times n_j}$ is the $n_i\times n_j$ matrix with all entries equal to 1, $I_{n_j}$ is the $n_j\times n_j$ identity matrix, and $\delta_{ij}$ is the Kronecker delta.
\end{proposition}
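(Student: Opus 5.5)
The plan is to verify both block identities entrywise, directly from the definition of $E(g)$ for $g=\bigvee_h a$. Since $\pi$ is a segmented partition of $\ell$, ordering $V(g)=\ell$ as given makes each segment $\ell_j$ a consecutive block of indices. Hence $A(g)$ and $L(g)$ split into blocks $(\cdot)_{\ell_i\ell_j}$ of size $n_i\times n_j$, and it suffices to fix $x\in\ell_i$ and $y\in\ell_j$ and compute the corresponding entry.

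For the adjacency matrix I will use that the union defining $E(g)$ is disjoint by type of edge. Each $E(g_{\ell_j})$ joins two vertices of the same segment. Each set $\{\{x,y\}:x\in\ell_r,y\in\ell_s\}$ with $\{\ell_r,\ell_s\}\in E(h)$ joins vertices of distinct segments, because $h$ is simple and so $r\neq s$.
\begin{itemize}
\item If $i=j$, then $\{x,y\}\in E(g)$ if and only if $\{x,y\}\in E(g_{\ell_j})$. Since $(A(h))_{jj}=0$, this gives $(A(g))_{\ell_j\ell_j}=A(g_{\ell_j})$, matching (\ref{Adyacencia particionada}).
\item If $i\neq j$, then $\{x,y\}\in E(g)$ if and only if $\{\ell_i,\ell_j\}\in E(h)$, independently of the particular $x$ and $y$. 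So the whole block equals $(A(h))_{ij}1_{n_i\times n_j}$, matching (\ref{Adyacencia particionada}).
\end{itemize}

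For the Laplacian I will use $L(g)=D(g)-A(g)$, so only the degrees remain to be computed. For $x\in\ell_j$, its neighbours in $g$ are its neighbours in $g_{\ell_j}$ together with every vertex of every segment $\ell_s$ adjacent to $\ell_j$ in $h$. These two sets are disjoint, so
\[ val_g(x)=val_{g_{\ell_j}}(x)+\sum_{\{\ell_s,\ell_j\}\in E(h)} n_s = val_{g_{\ell_j}}(x)+N_j. \]
Therefore $D(g)$ is block diagonal with diagonal blocks $D(g_{\ell_j})+N_jI_{n_j}$. Subtracting the adjacency blocks already computed gives $L(g_{\ell_j})+N_jI_{n_j}$ on the diagonal and $-(A(h))_{ij}1_{n_i\times n_j}$ off the diagonal, which is (\ref{Laplaciana particionada}).

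No step is a genuine obstacle; the computation is direct. The only point needing care is the disjointness bookkeeping: within-segment edges come only from the $g_{\ell_j}$, and cross-segment edges come only from $h$. This relies on $h$ having no loops and on each $g_{\ell_j}$ having vertex set exactly $\ell_j$. It is this separation that makes the valence split additively and makes the off-diagonal blocks constant.
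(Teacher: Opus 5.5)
Your proof is correct. The paper states this proposition without a written proof and treats it as immediate from the definition of $\bigvee_h a$; your entrywise check supplies exactly those details, using that within-segment edges come only from the $g_{\ell_j}$, cross-segment edges only from $h$, and $val_g(x)=val_{g_{\ell_j}}(x)+N_j$ with $L=D-A$.
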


\

The equalities (\ref{Adyacencia particionada}) and (\ref{Laplaciana particionada}) can be written using the direct sum of matrices and the \textit{partitioned right Hadamard product}. This latter operation is defined as follows: If $A$ is a $k\times k$ matrix, $B$ is an $n\times n$ matrix with $k\leq n$, and $\pi=(\ell_j)^k_{j=1}$ is a segmented partition of $(1,2,...,n)$, \emph{the partitioned right Hadamard product}, $A\circ_{\pi}B$, is defined as the matrix represented by the segments of $\pi$.

\[(A\circ_{\pi}B)_{\ell_i\ell_j}=A_{ij}B_{\ell_i\ell_j}\]
Therefore,
\[A\left(\bigvee_h a\right)=\left(\bigoplus^k_{j=1} A(g_{\ell_j})\right)+ A(h) \circ_\pi 1_{n\times n}\]
\[L\left(\bigvee_h a\right)=\left(\bigoplus^k_{j=1} (L(g_{\ell_j})+N_jI_{n_j})\right)- A(h) \circ_\pi 1_{n\times n}\]

The adjacency and Laplacian matrices of the graph $g$ corresponding to Figure \ref{f1} are partitioned as follows:

\begin{eqnarray*}
A(g)&=&\begin{blockarray}{cccc}
 &\ell_1&\ell_2&\ell_3\\
\begin{block}{r(c|c|c)}
\ell_1& A(g_{l_1})& 1_{n_1 \times n_2} & 0_{n_1 \times n_3}\\
\cline{2-4}
\ell_2& 1_{n_2 \times n_1}& A(g_{l_2}) & 1_{n_2 \times n_3}\\
\cline{2-4}
\ell_3& 0_{n_3 \times n_1} & 1_{n_3\times n_2} & A(g_{l_3})\\
\end{block}
\end{blockarray}\\
L(g)&=&\begin{blockarray}{cccc}
 &\ell_1&\ell_2&\ell_3\\
\begin{block}{r(c|c|c)}
\ell_1& L(g_{\ell_1})+ N_1 I_{n_1}& -1_{n_1 \times n_2} & 0_{n_1 \times n_3}\\
\cline{2-4}
\ell_2& -1_{n_2 \times n_1}& L(g_{\ell_2})+ N_2 I_{n_2}& -1_{n_2 \times n_3}\\
\cline{2-4}
\ell_3& 0_{n_3 \times n_1} & -1_{n_3\times n_2} & L(g_{\ell_3})+ N_3 I_{n_1}\\
\end{block}
\end{blockarray}
\end{eqnarray*}

\

\subsection{Fiedler's lemma and spectra of generalized composition of graphs}
In his study for necessary and sufficient conditions for $n$ real numbers to be eigenvalues of a nonnegative $n \times n$ matrix, Miroslav Fiedler came across the following simple result:

\begin{lemma}[Fiedler, Lemma 2.2 in \cite{Fidler1974}]
Let $M_1, M_2$ be symmetric matrices of size $m \times m$ and $n \times n$, respectively. Let $\alpha_1, \hdots, \alpha_m$ and $\beta_1, \hdots, \beta_n$ the corresponding eigenvalues of $A$ and $B$. Suppose that $\alpha_1$ (resp. $\beta_1$) has a unit eigenvector $u$ (resp. $v$). Then for any real value $\rho$, the matrix
\begin{equation}\label{Fiedler}
C=
\begin{bmatrix}
 M_1 & \rho uv^t  \\
 \rho vu^t & M_2
\end{bmatrix}
\end{equation}
has eigenvalues $\alpha_2,..., \alpha_m$, $\beta_2,...,\beta_n$, $\gamma_1, \gamma_2$, where $\gamma_1, \gamma_2$ are eigenvalues of the matrix
\[\widehat{C}=
\begin{bmatrix}
\alpha_1 & \rho   \\
\rho  & \beta_1
\end{bmatrix}\]
\end{lemma}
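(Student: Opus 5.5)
The plan is to build an explicit orthonormal eigenbasis of $\mathbb{R}^{m+n}$ for $C$, adapted to the splitting $\mathbb{R}^m\oplus\mathbb{R}^n$. Since $M_1$ is symmetric and $u$ is a unit eigenvector for $\alpha_1$, we may complete $u$ to an orthonormal eigenbasis $u=u_1,u_2,\dots,u_m$ of $M_1$ with $M_1u_i=\alpha_iu_i$. In the same way we complete $v$ to an orthonormal eigenbasis $v=v_1,\dots,v_n$ of $M_2$ with $M_2v_j=\beta_jv_j$. (Here $A,B$ in the statement are read as $M_1,M_2$.)

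\textbf{Step 1 (the ``inherited'' eigenvalues).} For $i\ge 2$ take the vector $x_i=\begin{bmatrix}u_i\\0\end{bmatrix}$. Then
\[
Cx_i=\begin{bmatrix}M_1u_i\\ \rho v(u^tu_i)\end{bmatrix}=\begin{bmatrix}\alpha_iu_i\\0\end{bmatrix}=\alpha_i x_i,
\]
because $u^tu_i=0$ by orthogonality. Symmetrically, for $j\ge2$ the vector $y_j=\begin{bmatrix}0\\v_j\end{bmatrix}$ satisfies $Cy_j=\beta_jy_j$, using $v^tv_j=0$. This already yields $m+n-2$ orthonormal eigenvectors with eigenvalues $\alpha_2,\dots,\alpha_m,\beta_2,\dots,\beta_n$.

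\textbf{Step 2 (the remaining two-dimensional piece).} Let $W$ be the span of $\begin{bmatrix}u\\0\end{bmatrix}$ and $\begin{bmatrix}0\\v\end{bmatrix}$; this is exactly the orthogonal complement of the vectors from Step 1. For $w=\begin{bmatrix}au\\bv\end{bmatrix}$ we compute, using $u^tu=v^tv=1$,
\[
Cw=\begin{bmatrix}(\alpha_1a+\rho b)u\\(\rho a+\beta_1 b)v\end{bmatrix}.
\]
So $W$ is $C$-invariant, and in the orthonormal basis $\left\{\begin{bmatrix}u\\0\end{bmatrix},\begin{bmatrix}0\\v\end{bmatrix}\right\}$ the restriction $C|_W$ has matrix $\widehat C$. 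Being real symmetric, $\widehat C$ has eigenvalues $\gamma_1,\gamma_2$ with eigenvectors $(a_k,b_k)^t$, and these lift to eigenvectors $\begin{bmatrix}a_ku\\b_kv\end{bmatrix}$ of $C$.

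\textbf{Step 3 (multiplicities).} Combining Steps 1 and 2 gives $m+n$ mutually orthogonal eigenvectors of $C$, hence an eigenbasis. Therefore the spectrum of $C$, counted with multiplicity, is exactly $\alpha_2,\dots,\alpha_m,\beta_2,\dots,\beta_n,\gamma_1,\gamma_2$.

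The only delicate point is this bookkeeping of multiplicities: an eigenvalue may be repeated, or $\gamma_k$ may coincide with some $\alpha_i$. Working with an orthonormal eigenbasis rather than with eigenspaces sidesteps this, since the orthogonal decomposition $\mathbb{R}^{m+n}=W^\perp\oplus W$ into $C$-invariant pieces gives $\det(C-\lambda I)=\det(\widehat C-\lambda I)\prod_{i\ge2}(\alpha_i-\lambda)\prod_{j\ge2}(\beta_j-\lambda)$ directly.
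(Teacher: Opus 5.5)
Your proof is correct. The paper gives no proof of this lemma: it quotes it from Fiedler's 1974 paper and only rewrites it in the word notation as $\sigma(C)=\frac{\sigma(M_1)}{\alpha_1}\cdot\frac{\sigma(M_2)}{\beta_1}\cdot\sigma(\widehat{C})$, so there is no in-paper argument to compare against. Your construction is essentially Fiedler's original proof: the orthonormal eigenvectors $(u_i,0)$ and $(0,v_j)$ for $i,j\ge 2$, together with the lifts $(au,bv)$ of eigenvectors of $\widehat{C}$ on the invariant plane $W$. The same construction, with $W$ spanned by $k$ block-supported unit eigenvectors, also proves the generalized version of Cardoso et al.\ that the paper actually relies on.

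Two details deserve one explicit line each. First, completing $u$ to an orthonormal eigenbasis uses that $u^{\perp}$ is $M_1$-invariant, so the eigenvalues of $M_1$ there are exactly $\alpha_2,\dots,\alpha_m$ with multiplicity. Second, in Step 3 you should choose the two eigenvectors of $\widehat{C}$ orthonormal. This is possible because $\widehat{C}$ is symmetric, and it only matters when $\gamma_1=\gamma_2$.
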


\
We refer to this result as \emph{Fiedler's lemma}. This result allows Fiedler to prove that known sufficient conditions for $n$ real numbers to be eigenvalues of a nonnegative $n \times n$ matrix are also sufficient for the existence of a symmetric matrix with these eigenvalues (see Theorem 2.5 in \cite{Fidler1974}). Under our convention, Fiedler's lemma reads as follows:
\begin{equation}
\sigma(C) = \frac{\sigma(M_1)}{\alpha_1}\cdot\frac{\sigma(M_2)}{\beta_1}\cdot \sigma(\widehat{C}).
\end{equation}

\medbreak

From the work of Cardoso et al. in \cite{Cardoso2011}, \cite{Cardoso2013} and \cite{Cardoso2022}, Fiedler's lemma has been proved to be useful for calculating the eigenvalues of the generalized composition of graphs, under regularity assumptions. We first illustrate the method used in \cite{Cardoso2013} to compute the eigenvalues of the join operation of two regular graphs. Let $g_1$ and $g_2$ be regular graphs, with regularities $r_1$ and $r_2$, respectively and $m =|V(g_1)|, n=|V(g_2)|$. By definition, the adjacency matrix of $g_1 \vee g_2$ is a $mn$-square matrix containing $A(g_1)$ and $A(g_2)$ as diagonal matrices. Fiedler's lemma implies directly the following description of the eigenvalues of $g_1 \vee g_2$:
\begin{equation}\label{spAdj2}
\sigma(A(g_1\vee g_2))=\frac{\sigma(A(g_1))}{r_1}\cdot \frac{\sigma(A(g_2))}{r_2} \cdot \sigma \begin{bmatrix}
    r_1& \sqrt{mn}\\
    \sqrt{mn} & r_2
\end{bmatrix}.
\end{equation}

A similar description can be obtained from Fiedler's lemma when computing the spectrum of the Laplacian associated to $g_1 \vee g_2$:
\begin{equation}\label{spLp2}
\sigma(L(g_1\vee g_2))=\frac{\sigma(L(g_1) + n \text{Id}_m)}{n}\cdot \frac{\sigma(L(g_2) + m \text{Id}_n)}{m} \cdot \sigma \begin{bmatrix}
    n& -\sqrt{mn}\\
    -\sqrt{mn} & m
\end{bmatrix}.
\end{equation}

\

In order to give an analogue of formulas \eqref{spAdj2} and \eqref{spLp2} for the eigenvalues of the generalized composition of graphs, Cardoso et al. introduce the following generalization of Fiedler's lemma.

\begin{lemma}[Generalized Fiedler's Lemma (Cardoso et al., \cite{Cardoso2013})]\label{Fiedler Generalizado}
Let $\mathcal{M}=\{M_j\}^k_{j=1}$ be a set of symmetric matrices such that for each $j=1,2,...,k$, the matrix $M_j$ has order $n_j\times n_j$ with eigenpairs $(\alpha_{r,j},u_{r,j})$ for all $r\in I_j=\left\lbrace 1, \ldots, n_j\right\rbrace$, where the set of eigenvectors $\left\lbrace u_{r,j}:r\in I_j \right\rbrace$ is orthonormal. Let $\rho$ be a symmetric matrix of order $k\times k$ with zeros on the main diagonal and $\hat{\alpha}=(\alpha_{i_1,1},, \ldots,, \alpha_{i_k,k})$ be a $k$-tuple, where $\alpha_{i_j,j}$ is chosen from the list of eigenvalues of matrix $M_j$. Then, the matrix
\[
\rho(\mathcal{M})=
\begin{bmatrix}
M_1&\rho_{1,2}u_{i_1,1}u_{i_2,2}^t&\cdots&\rho_{1,k}u_{i_1,1}u_{i_k,k}^t\\
\rho_{1,2}u_{i_2,2}u_{i_1,1}^t&M_2&\cdots&\rho_{2,k}u_{i_2,2}u_{i_k,k}^t&\\
\vdots&\vdots&\ddots&\vdots\\
\rho_{1,k}u_{i_k,k}u_{i_1,1}^t&\rho_{2,k}u_{i_k,k}u_{i_2,2}^t&\cdots&M_k
\end{bmatrix}
\]
	
has the following set of eigenvalues
\begin{center}
$\displaystyle{\left(\bigcup_{j=1}^{k}\left\lbrace \alpha_{1,j},\, \ldots,\, \alpha_{n_j,j} \right\rbrace\setminus\left\lbrace \alpha_{i_j,j} \right\rbrace\right)\cup\left\lbrace \gamma_1,\, \ldots ,\, \gamma_k \right\rbrace},$
\end{center}
where \,$\gamma_1,\, \gamma_2,\, \ldots ,\, \gamma_k$\, are the eigenvalues of the matrix
\[\rho(\widehat{\alpha})=
\begin{bmatrix}
\alpha_{i_1,1}&\rho_{1,2}&\cdots&\rho_{1,k}\\
\rho_{1,2}&\alpha_{i_2,2}&\cdots&\rho_{2,k}\\
\vdots&\vdots&\ddots&\vdots\\
\rho_{1,k}&\rho_{2,k}&\cdots&\alpha_{i_k,k}
\end{bmatrix}.
\]
\end{lemma}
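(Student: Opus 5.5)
The plan is to exhibit an explicit orthonormal eigenbasis of $\rho(\mathcal{M})$, built from the eigenvectors of the blocks $M_j$ together with eigenvectors of the small matrix $\rho(\widehat{\alpha})$. Write $N=n_1+\cdots+n_k$ and partition $\mathbb{R}^N$ into blocks of sizes $n_1,\ldots,n_k$. For a vector $x\in\mathbb{R}^{n_j}$, let $\iota_j(x)\in\mathbb{R}^N$ be the vector equal to $x$ on block $j$ and zero elsewhere.

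\textbf{Step 1: the untouched eigenvalues.} Fix $j$ and $r\neq i_j$, and set $w=\iota_j(u_{r,j})$. The $s$-th block of $\rho(\mathcal{M})w$ is $M_ju_{r,j}=\alpha_{r,j}u_{r,j}$ when $s=j$. When $s\neq j$ it is $\rho_{s,j}u_{i_s,s}\,(u_{i_j,j}^t u_{r,j})=0$, by orthonormality within block $j$. Hence $w$ is an eigenvector with eigenvalue $\alpha_{r,j}$. This produces $N-k$ eigenpairs, and the vectors are mutually orthonormal.

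\textbf{Step 2: the $k$ new eigenvalues.} Since $\rho(\widehat{\alpha})$ is real symmetric, take an orthonormal eigenbasis $c^{(1)},\ldots,c^{(k)}\in\mathbb{R}^k$ with eigenvalues $\gamma_1,\ldots,\gamma_k$. Set
\[
z_m=\sum_{j=1}^k c^{(m)}_j\,\iota_j(u_{i_j,j}).
\]
The $s$-th block of $\rho(\mathcal{M})z_m$ is
\[
c^{(m)}_s\alpha_{i_s,s}u_{i_s,s}+\sum_{j\neq s}\rho_{s,j}c^{(m)}_j u_{i_s,s}\,(u_{i_j,j}^tu_{i_j,j}),
\]
which equals $\bigl(\rho(\widehat{\alpha})c^{(m)}\bigr)_s u_{i_s,s}=\gamma_m c^{(m)}_s u_{i_s,s}$. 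This uses that the diagonal of $\rho$ is zero and that $u_{i_j,j}$ is a unit vector. So $z_m$ is an eigenvector of $\rho(\mathcal{M})$ with eigenvalue $\gamma_m$.

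\textbf{Step 3: counting.} The $z_m$ are orthonormal, because $\langle z_m,z_{m'}\rangle=\langle c^{(m)},c^{(m')}\rangle$. Each $z_m$ is also orthogonal to every vector from Step 1, since within block $j$ it is a multiple of $u_{i_j,j}$. Together Steps 1 and 2 therefore give $N$ orthonormal eigenvectors. This is a full eigenbasis, so the multiset of eigenvalues is exactly the one claimed.

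The only delicate point is bookkeeping the block products in Step 2, in particular checking that the off-diagonal block in position $(s,j)$ is $\rho_{s,j}u_{i_s,s}u_{i_j,j}^t$ consistently for $s<j$ and $s>j$, using the symmetry of $\rho$. I expect no genuine obstacle beyond this.
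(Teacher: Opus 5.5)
Your proof is correct and complete. Step~1 gives the $N-k$ block-supported eigenvectors $\iota_j(u_{r,j})$ with $r\neq i_j$. Step~2 lifts each eigenvector $c^{(m)}$ of $\rho(\widehat{\alpha})$ to $z_m$, using only $\rho_{s,s}=0$ and $\|u_{i_j,j}\|=1$. The orthonormality count in Step~3 then gives the eigenvalues with the right multiplicities, which is the correct multiset reading of the set differences and unions in the statement.

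The paper gives no proof of this lemma; it only cites Cardoso et al. Your argument is the standard proof of the result, namely the $k$-block version of Fiedler's original eigenvector construction, and that is essentially how the cited source proves it.
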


Following our convention, the above lemma reads as follows:
\begin{equation}
\sigma(\rho(\mathcal{M})) = \prod_{j=1}^k \frac{\sigma(M_j)}{\alpha_{i_j,j}} \cdot \sigma(\rho(\widetilde{\alpha})).
\end{equation}
Again, we have $g=\displaystyle{\bigvee_h a}$ in where $\pi=(\ell_1, \ell_2,..., \ell_k)$ is a segmented partition of $\ell$, $|\ell_j|=n_j$, $\mathit{a}=(g_{\ell_j})^k_{j=1}$ is the ordered assembly of internal graphs and $h$ is the external graph whose vertex set is $\pi$. Let $\mathcal{M}=\{A(g_{\ell_j})\}^k_{j=1}$ and let $\rho$ be the symmetric matrix of size $k$ defined as
\begin{equation}\label{Ro}
	\rho_{s,t}\;=\;\left\{\begin{array}{ll}
													\sqrt{n_sn_t}, & \mbox{if } \{\ell_s, \ell_t\}\in E(h)\\[0.3cm]
													            0, & \mbox{if } \{\ell_s, \ell_t\}\notin E(h)
												   \end{array}\right.
\end{equation}

If for each $j = 1, 2, \ldots, k$, the graph $g_{\ell_j}$ is $r_j$-regular, then the vector $1_{n_j\times 1}$ of size $n_j$ with all entries equal to 1 is an eigenvector of $A(g_{\ell_j})$ with eigenvalue $r_j$. Thus, if $u_{i_j, j}=\frac{1}{\sqrt{n_j}}1_{n_j\times 1}$ and $\alpha_{i_j, j}=r_j$, then $(u_{i_j, j}, \alpha_{i_j, j})$ is an eigenvector-eigenvalue pair of $A(g_{\ell_j})$. Let $\rho(\mathcal{M})$ be the corresponding matrix from Lemma (\ref{Fiedler Generalizado}). From equation (\ref{Adyacencia particionada}) in Proposition (\ref{Matriz join}), it follows that $A(g) = \rho(\mathcal{M})$. If $\rho(\hat{\alpha})$ is the matrix $k\times k$ with the vector $\hat{\alpha} = (p_1, p_2, ..., p_k)$ of internal regularities on its main diagonal, and the respective entries of the matrix $\rho$ elsewhere, then we have the following theorem published by Cardoso in \cite{Cardoso2013}.

\begin{theorem}[Cardoso's First Approach]\label{2.11}
Consider $g = \displaystyle{\bigvee_h \mathit{a}}$, where $\mathit{a} =(g_{\ell_j})^k_{j=1}$ is an ordered assembly of simple graphs such that each $g_{\ell_j}$ is $p_j$-regular, $\pi=(\ell_j)^k_{j=1}$ is a segmented partition of $\ell$, and $h$ is a simple graph over $\pi$. Let $\hat{\alpha} = (p_1, p_2, ..., p_k)$ be the vector of internal regularities and let $\rho$ be defined as in (\ref{Ro}). If $\mathcal{M} = \{A(g_{\ell_j})\}^k_{j=1}$, then $A(g)$ is equal to the corresponding matrix $\rho(\mathcal{M})$ from the Generalization of Fiedler's Lemma (\ref{Fiedler Generalizado}), and therefore,
	\begin{equation}\label{espectro adyacencia}
	\sigma(A(g))=\left(\prod_{j=1}^{k}\,\dfrac{\sigma(A(g_{\ell_j}))}{p_j}\right) \sigma(\rho(\hat{\alpha})).
\end{equation}
\end{theorem}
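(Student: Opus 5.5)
The plan is to verify, block by block, that $A(g)$ coincides with the matrix $\rho(\mathcal{M})$ of Lemma \ref{Fiedler Generalizado} for a specific choice of eigenpairs. Once this identification is made, the spectral formula (\ref{espectro adyacencia}) is exactly the conclusion of that lemma written in the multiplicative convention.

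\textbf{Choice of eigenpairs.} For each $j$, the matrices $A(g_{\ell_j})$ are real symmetric, so by the spectral theorem each admits an orthonormal basis of eigenvectors $\{u_{r,j}: r\in I_j\}$. Since $g_{\ell_j}$ is $p_j$-regular, every row of $A(g_{\ell_j})$ sums to $p_j$. Hence $A(g_{\ell_j})1_{n_j\times 1}=p_j1_{n_j\times 1}$, and $u:=\frac{1}{\sqrt{n_j}}1_{n_j\times 1}$ is a unit eigenvector for the eigenvalue $p_j$.

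The one point needing care is that the lemma requires the distinguished eigenvector to be a member of an orthonormal eigenbasis. To arrange this, we pick an orthonormal basis of the $p_j$-eigenspace containing $u$ (extend $u$ by Gram--Schmidt inside that eigenspace), and take orthonormal bases of the other eigenspaces. Eigenspaces of a symmetric matrix for distinct eigenvalues are mutually orthogonal, so the union is an orthonormal eigenbasis of $\mathbb{R}^{n_j}$. In it we label $u_{i_j,j}=u$ and $\alpha_{i_j,j}=p_j$.

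\textbf{Block comparison.} Diagonal blocks of $\rho(\mathcal{M})$ are $A(g_{\ell_j})$, which agree with (\ref{Adyacencia particionada}) at $i=j$, since $h$ has no loops. For $s\neq t$, the off-diagonal block of $\rho(\mathcal{M})$ is
\[
\rho_{s,t}\,u_{i_s,s}u_{i_t,t}^t=\rho_{s,t}\frac{1}{\sqrt{n_sn_t}}1_{n_s\times 1}1_{1\times n_t}=\frac{\rho_{s,t}}{\sqrt{n_sn_t}}1_{n_s\times n_t}.
\]
By (\ref{Ro}), this equals $1_{n_s\times n_t}$ when $\{\ell_s,\ell_t\}\in E(h)$ and $0$ otherwise, i.e. $(A(h))_{st}1_{n_s\times n_t}$. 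Comparing with Proposition \ref{Matriz join}(1) gives $A(g)=\rho(\mathcal{M})$. The matrix $\rho$ is symmetric with zero diagonal, as the lemma requires.

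\textbf{Conclusion.} Apply Lemma \ref{Fiedler Generalizado}. The spectrum of $A(g)$ is the union over $j$ of $\sigma(A(g_{\ell_j}))$ with one copy of $p_j$ removed, together with the eigenvalues of $\rho(\hat\alpha)$, where $\hat\alpha=(p_1,\dots,p_k)$. In word notation this is (\ref{espectro adyacencia}).

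The only step that is not immediate is the orthonormal-basis compatibility discussed above, particularly when $p_j$ is a multiple eigenvalue (for example, when $g_{\ell_j}$ is disconnected). It is resolved by choosing the basis inside each eigenspace as described.
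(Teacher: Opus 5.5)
Your proposal is correct and matches the paper's argument: choose $u_{i_j,j}=\frac{1}{\sqrt{n_j}}1_{n_j\times 1}$ with eigenvalue $p_j$, identify $A(g)=\rho(\mathcal{M})$ block by block via Proposition \ref{Matriz join}, and read off the spectrum from Lemma \ref{Fiedler Generalizado}. Your extension of $u$ to an orthonormal eigenbasis inside the $p_j$-eigenspace supplies a detail the paper leaves implicit; it matters when $p_j$ is a repeated eigenvalue.
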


For the Laplacian case, consider $\mathcal{M}=\{L(g_{\ell_j})+ N_jI_{n_j}\}^k_{j=1}$. Here, $N_j$ denotes the external valence of $\ell_j$ (as defined in Proposition \ref{Matriz join}). It is well known that the matrix $L(g_{\ell_j})$ have
$(0, 1_{n_j\times 1})$ eigenpair. Thus, if $u_{i_j, j}=\frac{1}{\sqrt{n_j}}1_{n_j\times 1}$ and $\alpha_{i_j, j}=N_j$, then $(u_{i_j, j}, \alpha_{i_j, j})$ is an eigenvector-eigenvalue pair of $L(g_{\ell_j})+ N_jI_{n_j}$. Let $-\rho$ the symmetric matrix of order $k\times k$ resulting from the product of at least one to the matrix $\rho$ defined in (\ref{Ro}). Let $-\rho(\mathcal{M})$ the corresponding matrix from Lemma \ref{Fiedler Generalizado}. From the identity (\ref{Laplaciana particionada}) of Proposition \ref{Matriz join}, we get $L(g)=-\rho(\mathcal{M})$. Let $-\rho(\hat{\alpha})$ the matrix of order $k\times k$ with principal diagonal being the vector $\hat{\alpha}=(N_1, N_2,..., N_k)$ of external conections, and the corresponding entries from $-\rho$ elsewhere. Using again Lemma \ref{Fiedler Generalizado}, we obtains a proof of the following result, due to Cardoso in \cite{Cardoso2013} (Theorem 8).

\begin{theorem}[Second Cardoso's approach]\label{2.19}
Consider $g=\displaystyle{\bigvee_h a}$, where $a=(g_{\ell_j})^k_{j=1}$ is an assembly of simple graphs, $\pi=(\ell_j)^k_{j=1}$ is a segmented
partition of $\ell$ and $h$ is a simple graph over $\pi$. Let $\hat{\alpha}=(N_1, N_2,...,N_k)$ the vector of external conections, where each $N_j$ is
defined as in Proposition \ref{Matriz join}. Let $\rho$ defined as in (\ref{Ro}). If $\mathcal{M}=\{L(g_{\ell_j})+ N_jI_{n_j}\}^k_{j=1}$, then $L(g)$
is equal to the matrix $-\rho(\mathcal{M})$ given by the generalized Fiedler's \ref{Fiedler Generalizado}. In particular,
\begin{equation}\label{espectro laplaciano}
\sigma(L(g))=\left(\prod_{j=1}^{k}\,\dfrac{\sigma(L(g_{\ell_j})+ N_jI_{n_j})}{N_j}\right) \sigma(-\rho(\hat{\alpha}))
\end{equation}
\end{theorem}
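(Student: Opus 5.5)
The plan is to apply the Generalized Fiedler's Lemma (Lemma \ref{Fiedler Generalizado}) with the family $\mathcal{M}=\{L(g_{\ell_j})+N_jI_{n_j}\}_{j=1}^k$ and the coupling matrix $-\rho$, where $\rho$ is as in (\ref{Ro}). The argument has three steps: identify $L(g)$ with the block matrix $-\rho(\mathcal{M})$ of that lemma, check the lemma's hypotheses, and read off the spectrum.

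\textbf{Hypotheses.} Each $M_j=L(g_{\ell_j})+N_jI_{n_j}$ is real symmetric, so by the spectral theorem it has an orthonormal eigenbasis. We may take $u_{i_j,j}=\frac{1}{\sqrt{n_j}}1_{n_j\times 1}$ as one of its members. Indeed, every row of $L(g_{\ell_j})$ sums to zero, so $L(g_{\ell_j})1_{n_j\times 1}=0$, and hence $M_j u_{i_j,j}=N_j u_{i_j,j}$; this gives the eigenvalue $\alpha_{i_j,j}=N_j$. Completing $u_{i_j,j}$ to an orthonormal eigenbasis inside the eigenspaces of $M_j$ is possible because eigenspaces of a symmetric matrix are orthogonal. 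Finally, $-\rho$ is symmetric with zero diagonal, since $\rho_{s,t}=\rho_{t,s}$ and $h$ has no loops.

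\textbf{Block identification.} This is the only step requiring care. For $i\neq j$, the off-diagonal block of $-\rho(\mathcal{M})$ is
\[
-\rho_{i,j}\,u_{i_i,i}u_{i_j,j}^t=-\sqrt{n_in_j}\,\frac{1}{\sqrt{n_in_j}}\,1_{n_i\times n_j}=-1_{n_i\times n_j}
\]
when $\{\ell_i,\ell_j\}\in E(h)$, and it is $0$ otherwise. Equivalently, this block equals $-(A(h))_{ij}1_{n_i\times n_j}$. The diagonal blocks are $M_j=L(g_{\ell_j})+N_jI_{n_j}$. Comparing both descriptions with (\ref{Laplaciana particionada}) in Proposition \ref{Matriz join} gives $L(g)=-\rho(\mathcal{M})$ block by block. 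The underlying point is that the vertex degree in $g$ of a vertex in $\ell_j$ is its internal valency plus $N_j$; this is already built into Proposition \ref{Matriz join}, so we may invoke it directly.

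\textbf{Conclusion.} Lemma \ref{Fiedler Generalizado} now says that $\sigma(L(g))$ consists of all eigenvalues of each $M_j$ with one copy of $N_j$ removed, together with the eigenvalues of the $k\times k$ matrix having diagonal $(N_1,\dots,N_k)$ and off-diagonal entries $-\rho_{s,t}$. That matrix is precisely $-\rho(\hat\alpha)$. Written in the word convention, this is (\ref{espectro laplaciano}).

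The main obstacle is minor. The argument must not assume regularity of the $g_{\ell_j}$: unlike Theorem \ref{2.11}, the all-ones vector is an eigenvector of every Laplacian, so the result holds for arbitrary internal graphs. One must also remember that the sign flip applies only to the off-diagonal entries of $\rho$, while the diagonal of $-\rho(\hat\alpha)$ remains $+N_j$.
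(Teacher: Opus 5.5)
Your proposal is correct and follows the paper's own argument. The paper likewise takes the eigenpair $\bigl(N_j,\frac{1}{\sqrt{n_j}}1_{n_j\times 1}\bigr)$ of $L(g_{\ell_j})+N_jI_{n_j}$, identifies $L(g)=-\rho(\mathcal{M})$ via (\ref{Laplaciana particionada}) of Proposition~\ref{Matriz join}, and reads off the spectrum from Lemma~\ref{Fiedler Generalizado}; your explicit off-diagonal block computation and orthonormal completion just spell out details the paper leaves implicit.
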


\section{$\mathbb{L}$-species, and non-symmetric operads}
The goal of this section is to introduce the notion of $\mathbb{L}$-\emph{species or rigid species}, a language used to manipulates combinatorial structures constructed using \emph{labels with a total order} \cite{SpeciesBook,LibroMiguel2015}. Our main application is the study of generalized composition of graphs using the properties of $\mathbb{L}$-species.

Let $\mathbb{L}$ be the category whose objects are finite totally ordered sets, and whose morphisms are isomorphisms between them. Observe that between
any pair of liner orders of the same size there is only one isomorphism. An $\mathbb{L}$-species is a covariant
functor from $\mathbb{L}$ to the category $\mathbb{F}$ of finite sets and bijections. The $\mathbb{L}$-species with the natural
transformation as morphisms, form a category. We will say that $M = N$ if $M$ and $N$ are isomorphic $\mathbb{L}$-species. An element $m_{\ell}\in M[\ell]$ is called an $M$-structure over the linear order $\ell$, where $\ell$ is considered the set of labels of $m_{\ell}$. Two $M$-structures $m_{{\ell}_1}\in M[{\ell}_1]$ and $m_{{\ell}_2}\in M[{\ell}_2]$ are said to be isomorphic if the only isomorphism $f:{\ell}_1 \longrightarrow {\ell}_2$ is such that $M[f](m_{{\ell}_1})=m_{{\ell}_2}$.
A $\mathbb{L}$-species sending the empty set to the empty set is called \emph{positive}. A positive species sending singleton sets to singleton sets is called a \emph{delta} species. For a $\mathbb{L}-$species $M$, $M_+$ will denote the positive $\mathbb{L}$-species from $M$
\[M_+[\ell]=\left\{\begin{array}{cc}
    M[\ell] & \mbox{if}\;\;\ell\neq \emptyset,  \\
    \emptyset & \mbox{otherwise.}
\end{array}\right.
\]
More generally, for a positive integer $k$, $M_{k^+}$ denotes the $\mathbb{L}$-species
\[M_{k^+}[\ell]=\left\{\begin{array}{cc}
    M[\ell] & \mbox{if}\;\;|\ell|\geq k,  \\
    \emptyset & \mbox{otherwise.}
\end{array}\right.
\]
Given two $\mathbb{L}$-species $M$ and $N$, the operations of sum, ordinal product and ordinal substitution are defined respectively by
\begin{equation}
(M+N)[\ell]=M[\ell]\sqcup N[\ell]
\end{equation}

\begin{equation}\label{product species}
(M\lozenge N)[\ell]=\bigsqcup_{\ell_1+\ell_2=\ell} M[\ell_1]\times N[\ell_2]
\end{equation}
and
\begin{equation}\label{specie subtitution}
 M\langle N \rangle[\ell]=\bigsqcup_{\pi \in K[\ell]} N[\ell_1]\times N[\ell_2]\times \cdots \times N[\ell_{|\pi|}]\times M[\pi]
\end{equation}
The disjoint union on the definition of product runs over the decompositions of the linear order $\ell$
in two disjoint linear orders, where $\ell_1$ is an initial segment of $\ell$ and $\ell_2$ is a final segment of $\ell$. In
the definition of substitution $\pi$ runs over the set $K[\ell]$ of strong compositions or segmented partitions of $\ell$, i.e., tuples of
non-empty segments of $\ell$, $\pi=(\ell_1, \ell_2, . . . , \ell_{|\pi|})$ such that the juxtaposition $\ell_1\ell_2\cdots \ell_{|\pi|}$ is equal to $\ell$. Observe that $\pi$ is itself a totally ordered set, and hence the expression $M[\pi]$ makes sense.

The structures of $M\langle N \rangle$ are pairs of the form $(a,m)$ where $a=(n_{\ell_1} ,n_{\ell_2}, . . ., n_{\ell_{|\pi|}})=(n_{\ell})_{\ell\in \pi}$ is an ordered
assembly of $N$-structures and $m_{\pi}$ is an $M$-structure over $\pi$.

In this context the singular species is defined by
\[X[\ell]=\left\{\begin{array}{cc}
    \{\ell\} &\mbox{if}\;\;|\ell|=1,  \\
    \emptyset & \mbox{otherwise.}
\end{array}\right.
\]
is the identity for the operation of substitution, $M\langle X \rangle = X \langle M \rangle = M$, $M$ being a positive $\mathbb{L}$-species.

The empty $\mathbb{L}$-species
\[1[\ell]=\left\{\begin{array}{cc}
    \{\ell\} &\mbox{if}\;\;\ell=\emptyset,  \\
    \emptyset & \mbox{otherwise.}
\end{array}\right.
\]
is the identity with respect to the product $1\lozenge M = M \lozenge 1 = M$.

The class of positive $\mathbb{L}$-species with the operation of substitution and $X$ as identity is a monoidal
category. A monoid in this category will be called a \emph{non-symmetric operad}.
A non-symmetric operad is then a positive species plus two morphisms $\eta: M\langle M\rangle\longrightarrow M$, $e: X \longrightarrow M$, $\eta$ being
and associative ‘product’ and $e$ ‘choosing’ the identity in $M[\ell]$ for each unitary order linear $\ell$. We only consider \emph{connected} non-symmetric operads, i.e., those whose subjacent species is delta. For this case the
identity $e$ is trivially defined and we will only specify the product $\eta$.

\begin{example}\label{Join operad}  Let $\mathscr{G_+}$ be the $\mathbb{L}$-species of simple, non-empty graphs. $\mathscr{G_+}[\ell]$ is the set formed by all the simple graphs with set of vertices equal to linear order $\ell$. $\mathscr{G_+}$ is a non-symetric operad. For $((g_{\ell})_{{\ell}\in \pi}, g_{\pi})\in \mathscr{G_+}(\mathscr{G_+})[\ell]$, $\eta((g_{\ell})_{{\ell}\in \pi}, g_{\pi})$ is equal to the generalized composition of graphs $\displaystyle{\bigvee_{g_{\pi}} \mathit{a}}$
defined before, that is the graph $g$ constructed with vertices in $\ell$ as follows. Keep all the edges of the graphs in the segmented assembly, and for each edge $\{\ell_i, \ell_j\}$ of the external graph $g_{\pi}$, connect all the vertices in $\ell_i$ with all the vertices in $\ell_j$. In other words, $\{x, y\}$ is an edge in $g$ if one of the following two conditions is satisfied:
\begin{enumerate}
    \item $\{x, y\}$ is an edge in $g_{\ell}$ , for some segment $\ell$ of $\pi$.
    \item There exists an edge $\{\ell_i, \ell_j\}$ of $g_{\pi}$, such that $x\in \ell_i$ and $y\in \ell_j$.
\end{enumerate}
\end{example}

Iterate the equations (\ref{espectro adyacencia}) and (\ref{espectro laplaciano}) is not easy unless the generalized compositions of graphs can be organized into a single combinatorial and algebraic structure each time the segmented partition of internal assemblies that define them is refined. The use of operads defined by species and the operations between species allows us to describe in an organized and compact way this iteration. The species of Schröder trees is the one indicated to describe this iteration. We denote by $\mathscr{F}$ the $\mathbb{L}$-species of Schröder trees, or generalized
non-commutative parenthesizations. It satisfies the implicit equation
\begin{equation}\label{Schroder trees}
\mathscr{F} = X + E_{2^+}\langle \mathscr{F} \rangle.
\end{equation}
In this context, the implicit equation describes the structures of $\mathscr{F}[\ell]$ as plane trees whose
leaves are labelled with the linear order $\ell$ from left to right. For a tree $T\in \mathscr{F}[\ell]$, denote by
$\mathrm{Iv}(T)$ the set of the internal vertices of $T$. For an internal vertex $v$, we denote by $T_v$ the subtree of $T$ that has root $v$ and vertices all the descendants of $v$ in $T$, in otherwise, if $v$ is not an internal vertex, $T_v$ is a leaf of $T$. Let $\ell_v$ the linear suborder of $\ell$ consisting of the leaves of the subtree $T_v$. This allows us to identify the vertex $v$ with $\ell_v$, we shall use the set $\ell_v$ as a label for the vertex $v$. (See Figure \ref{f3})

Let $s(v)=\{v_1, v_2, . . . , v_k\}$ be the set of children of $v$ ordered from left to right. Each of them is either an internal vertex, or
a leaf. We denote by $\pi_v$ the segmented partition of $\ell_v$ induced by the branching at $v$: $\pi_v = (\ell_{v_1}, \ell_{v_2},..., \ell_{v_k})$

\begin{figure}[h]
    \centering
    \includegraphics[width=0.7\textwidth]{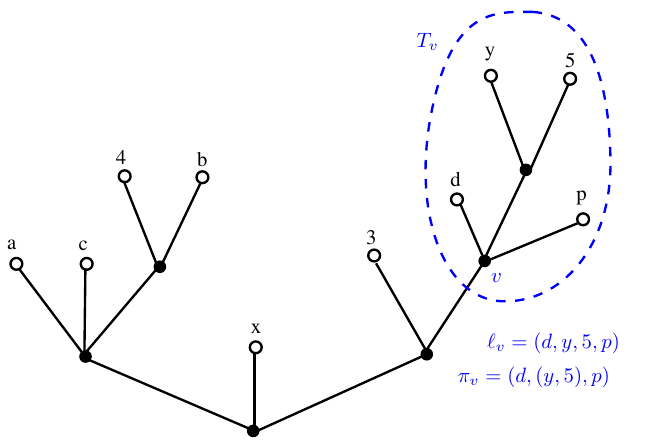}
    \caption{Schröder tree over linear order $\ell=(a,c,4,b,x,3,d,y,5,p)$}
    \label{f3}
\end{figure}
For a $\mathbb{L}$-species $M$ having the form
\[M = X + M_{2^+},\]
the $\mathbb{L}$-species of $M$-enriched Schröder trees is the solution to the implicit equation
\begin{equation}\label{enriched schroder tree}
\mathscr{F}_M = X + M_{2^+}\langle \mathscr{F}_M\rangle
\end{equation}

Using (\ref{enriched schroder tree}), we obtain the following recursive description $\mathscr{F}_M[\ell]$. If $\ell$ is an unitary linear order, the only
tree in $\mathscr{F}_M[\ell]$ consists of only one leaf labelled with the element of $\ell$ (the singleton-leaf tree).
\begin{figure}[h]
    \centering
    \includegraphics[width=1\textwidth]{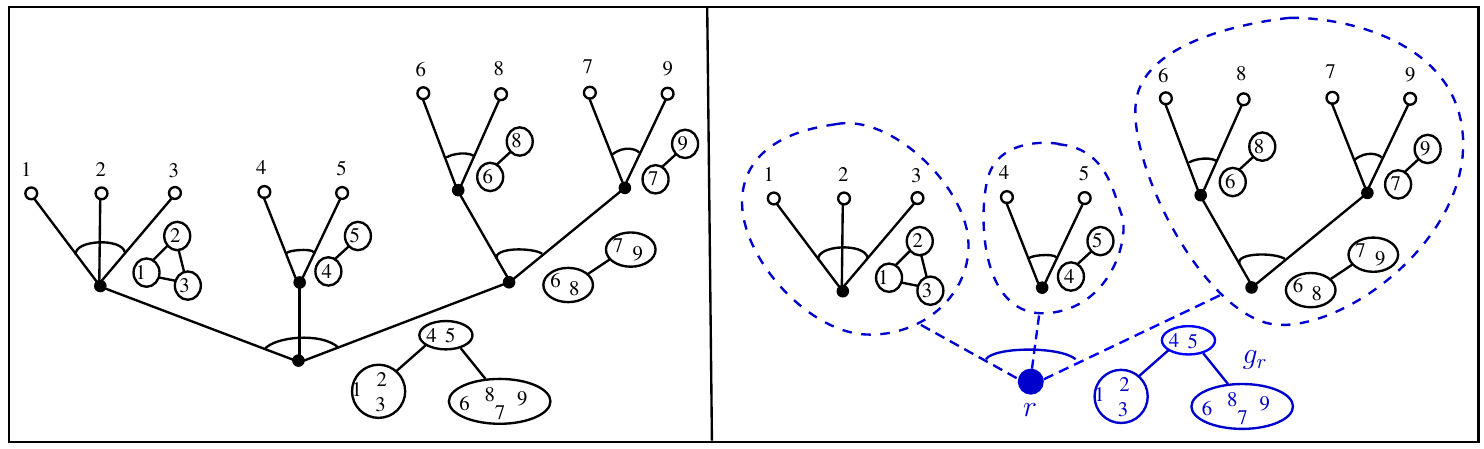}
    \caption{Schröder tree enriched with simple graph}
    \label{f4}
\end{figure}
If $|\ell|\geq 2$, an element $\mathscr{T}$ in $\mathscr{F}_M[\ell]$ is a pair $((\mathscr{T}_{v_1}, \mathscr{T}_{v_2},..., \mathscr{T}_{v_j}) , m_r)$ where $r$ is the root of $\mathscr{T}$, $v_1, v_2, . . . , v_j$ are the children of $r$ ordered from left to right, $m_r\in M_{2^+}[\pi_r]$ is the structure attached to the root of $\mathscr{T}$ and $(\mathscr{T}_{v_1}, \mathscr{T}_{v_2},..., \mathscr{T}_{v_j})$ is the ordered assembly of smaller trees whose roots are the children of $r$, that is to say, $\mathscr{T}_{v_i}\in \mathscr{F}_M[\ell_{v_i}]$ for $i=1,2,..., j$. Iterating this recursive description we get an explicit expression of the set $\mathscr{F}_M[\ell]$;
\[\mathscr{F}_M[\ell] = X[\ell] \cup \bigcup_{T\in \mathscr{F}[\ell]}  \{(T, (m_v)_{v\in \mathrm{Iv}(T)}): m_v\in M_{2^+}[\pi_v]\} \]
In other words, a tree $\mathscr{T}$ in $\mathscr{F}_M[\ell]$ is a Schröder tree $T \in \mathscr{F}[\ell]$ together with a structure $m_v\in
M_{2^+}[\pi_v]$ for each internal vertex $v\in \mathrm{Iv}(T)= \mathrm{Iv}(\mathscr{T})$ (see Figure \ref{f4}).

If $(M,\eta)$ is a non-symmetric operad, for a tree $\mathscr{T} \in \mathscr{F}_M[\ell]$, $\widehat{\eta}(\mathscr{T})$ is the element of $M[\ell]$ obtained by applying the operad product recursively on each level of the tree (see Figure \ref{f5}).
\begin{figure}[h]
    \centering
    \includegraphics[width=1\textwidth]{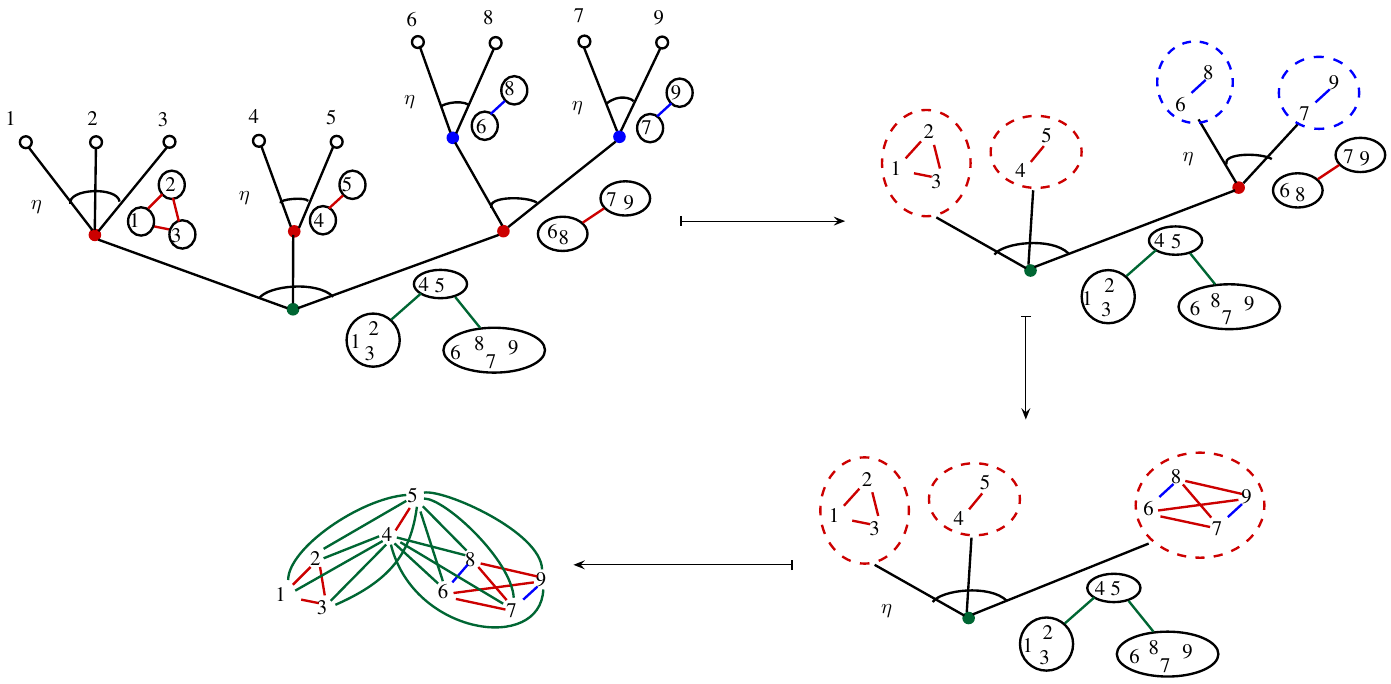}
    \caption{Iterative product $\widehat{\eta}$ in $\mathscr{F}_{\mathscr{G}_+}$}
    \label{f5}
\end{figure}
In this way, if $\mathscr{T}$ is the singleton-leaf tree, $\widehat{\eta}(\mathscr{T})$ is equal to the singleton structure in $M[\ell]$. Otherwise, we define $\widehat{\eta}(\mathscr{T})$ recursively by
\begin{equation}\label{operad product schroder tree}
\widehat{\eta}(\mathscr{T})=\eta((\widehat{\eta}(\mathscr{T}_{v_1}), \widehat{\eta}(\mathscr{T}_{v_2}),..., \widehat{\eta}(\mathscr{T}_{v_j})) , m_r)
\end{equation}

The previous recursive procedure can be replaced by any other that systematically apply the products on each internal vertex of the tree, finishing with the root. By associativity of the operad product the result will be the same. $\widehat{\eta}:\mathscr{F}_M \longrightarrow M$ is a species morphism and has been studied in \cite{Miguel-Jean2014} to give formulas of antipodes of Hopf algebras that are constructed with combinatorial objects.

\section{Iteration of Cardoso's formulas}
In this section we will get use of the operad  $(\mathscr{G}_+, \eta)$ in where $\eta$ is the generalized composition of graphs (see example \ref{Join operad}) to study the iterations of the equations (\ref{espectro adyacencia}) and (\ref{espectro laplaciano}), using the product $\widehat{\eta}$ on Schröder trees $\mathscr{F}_{\mathscr{G}_+}$. Given a tree $\mathscr{T} \in \mathscr{F}_{\mathscr{G}_+}[\ell]$ we denotes the product $\widehat{\eta}(\mathscr{T})$ simply by $\bigvee \mathscr{T}$. Given a vertex $v$ of $\mathscr{T}$, we denote by $\mathscr{T}_{v}$ the subtree of $\mathscr{T}$ whose set of leaves is $\ell_v$, and $g_{\ell_v}$ will denote the product $\bigvee \mathscr{T}_v$ whenever $g=\bigvee \mathscr{T}$. If $v\in \mathrm{Iv}(\mathscr{T})$ and $s(v)=\{v_1,v_2,...,v_k\}$ the set of childrens of $v$, we denotes by  $a_v$ the ordered assembly of graphs $\left(\bigvee \mathscr{T}_{v_i}\right)^{k}_{i=1}$. That is, the components of $a_v$ are obtained as the result of applying the product $\bigvee$ to the subtrees which have as root any children of $v$.

\subsection{Adjacency spectrum iteration}
For the case in which the components of $a_v$ are regular graphs, we denote by $\mathrm{reg}(a_v)$ the product of the regularities of the components in $a_v$, this is $\prod^k_{i=1}\mathrm{reg}\left(\bigvee \mathscr{T}_{v_i}\right)$. If $v\in \mathrm{Iv}(\mathscr{T})$ we denote by $A(a_v, g_v)$ to the matrix  $\rho(\widehat{\alpha})$ from Theorem \ref{2.11} after applying the product $\displaystyle{\bigvee_{g_v} a_v}$ with $g_v\in \mathscr{G}_+[\pi_v]$ . From equation (\ref{espectro adyacencia}) we get:
\begin{equation}\label{espectro adyacencia Join arbol}
\sigma\left(A\left(\bigvee_{g_v} a_v\right)\right)=\dfrac{\sigma(A(a_v))}{\mathrm{reg}(a_v)}\cdot \sigma(A(a_v, g_v)).
\end{equation}

Iterating this identity over each internal vertex $v$ of $\mathscr{T}$, leads to the following result.

\begin{theorem}\label{adjacency spectrum iterated}
Let $\mathscr{T}\in \mathscr{F}_{\mathscr{G}_+}[\ell]$ be a factorization of a graph $g$, that is, $\displaystyle{\bigvee \mathscr{T}=g}$. Suppose that for each internal vertex $v$ of $\mathscr{T}$, $a_v$ is an ordered assembly of regular graphs. Then
\begin{equation}\label{espectro adyacencia sobre un arbol enriquecido}
\sigma(A(g))=0^{|\ell|}\cdot \prod_{v\in \mathrm{Iv}(\mathscr{T})}\dfrac{\sigma(A(a_v,g_v))}{\mathrm{reg}(a_v)}
\end{equation}
\end{theorem}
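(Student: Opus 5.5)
We argue by induction on the number of internal vertices of $\mathscr{T}$ (equivalently, on $|\ell|$), peeling off the root and applying the one-level identity (\ref{espectro adyacencia Join arbol}) at the root.

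First we fix how to read (\ref{espectro adyacencia sobre un arbol enriquecido}). For an assembly $a_v=(g_{\ell_{v_1}},\dots,g_{\ell_{v_k}})$ we interpret $\sigma(A(a_v))$ as the concatenation $\prod_{i}\sigma(A(g_{\ell_{v_i}}))$. We interpret $\frac{\cdot}{\mathrm{reg}(a_v)}$ as the removal of one factor $\mathrm{reg}(g_{\ell_{v_i}})$ from each $\sigma(A(g_{\ell_{v_i}}))$. We compute in the free abelian group generated by $\mathbb{R}$, i.e. the group of fractions of the commutative words. There the quotients are formal, and the identity can be manipulated freely. At the end the result is an honest commutative word: each removed factor $\mathrm{reg}(g_{\ell_{v_i}})$ really occurs in $\sigma(A(g_{\ell_{v_i}}))$, because a $p$-regular graph has the all-ones eigenvector with eigenvalue $p$. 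A leaf child is the one-vertex graph, which is $0$-regular with spectrum $0$.

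\textbf{Base case.} $\mathscr{T}$ is the singleton-leaf tree. Then $g$ has one vertex, $A(g)=(0)$, and $\sigma(A(g))=0=0^{|\ell|}$, with the product over $\mathrm{Iv}(\mathscr{T})=\emptyset$ empty.

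\textbf{Inductive step.} Let $r$ be the root, with children $v_1,\dots,v_k$, so that $g=\bigvee_{g_r}a_r$ with $a_r=(\bigvee\mathscr{T}_{v_i})_{i=1}^k$. By hypothesis the components of $a_r$ are regular. Hence Theorem \ref{2.11}, in the form (\ref{espectro adyacencia Join arbol}), gives
\[
\sigma(A(g))=\prod_{i=1}^k\frac{\sigma(A(g_{\ell_{v_i}}))}{\mathrm{reg}(g_{\ell_{v_i}})}\cdot\sigma(A(a_r,g_r)).
\]
Each subtree $\mathscr{T}_{v_i}$ lies in $\mathscr{F}_{\mathscr{G}_+}[\ell_{v_i}]$ and inherits the hypothesis, since its internal vertices are internal vertices of $\mathscr{T}$ with the same assemblies. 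The induction hypothesis therefore gives
\[
\sigma(A(g_{\ell_{v_i}}))=0^{|\ell_{v_i}|}\prod_{w\in\mathrm{Iv}(\mathscr{T}_{v_i})}\frac{\sigma(A(a_w,g_w))}{\mathrm{reg}(a_w)}.
\]
For a leaf child this is just $\sigma=0$.

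Substituting, we use two facts: $\mathrm{Iv}(\mathscr{T})=\{r\}\sqcup\bigsqcup_i\mathrm{Iv}(\mathscr{T}_{v_i})$, and $\sum_i|\ell_{v_i}|=|\ell|$. Collecting the factors in the commutative group then yields
\[
0^{|\ell|}\cdot\frac{\sigma(A(a_r,g_r))}{\mathrm{reg}(a_r)}\cdot\prod_{w\ne r}\frac{\sigma(A(a_w,g_w))}{\mathrm{reg}(a_w)},
\]
which is the claimed formula.

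The main obstacle is bookkeeping rather than spectral theory. We must make sure the formal quotients are legitimate, i.e. that the division at the root removes an eigenvalue actually present. The cleanest route is the group-of-fractions argument above, combined with the observation that each $\mathrm{reg}(g_{\ell_{v_i}})$ is an eigenvalue of $A(g_{\ell_{v_i}})$. One can also check that the count works: the number of factors $\sum_v(|\pi_v|-|\pi_v|)+|\ell|$ equals $|\ell|$, consistent with $A(g)$ being $|\ell|\times|\ell|$.
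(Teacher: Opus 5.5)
Your proof is correct and is essentially the paper's argument: the paper defines $\psi$ as the right-hand side and checks that it satisfies the one-level recursion (\ref{espectro adyacencia Join arbol}) at the root, which is your inductive step, and its conclusion $\psi=\sigma$ is an implicit structural induction on the tree. Your explicit base case and your reading of the quotients in the group of fractions of the commutative words make precise what the paper leaves tacit; that reading is genuinely needed, since the individual factors $\sigma(A(a_v,g_v))/\mathrm{reg}(a_v)$ are generally not honest words.
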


\begin{proof}
Let $\psi$ be the function given by
\[A\left(\bigvee \mathscr{T}\right) \mapsto \displaystyle{0^{|\ell|}\cdot \prod_{v\in \mathrm{Iv}(\mathscr{T})}\dfrac{\sigma(A(a_v,g_v))}{\mathrm{reg}(a_v)}}.\]

Let $r$ be the root of $\mathscr{T}$, let $s(r)=\{r_1, r_2,...,r_k\}$ be the set of childrens of $r$. Then, for each $i=1,2,3,...,k$, we have
\begin{equation*}
\dfrac{\psi\left(A\left(\bigvee \mathscr{T}_{r_i}\right)\right)}{\mathrm{reg}\left(\bigvee \mathscr{T}_{r_i}\right)}=\left(0^{|\ell_{r_i}|}\cdot \prod_{w\in \mathrm{Iv}(\mathscr{T}_{r_i})}\dfrac{\sigma(A(a_w,g_w))}{\mathrm{reg}(a_w)}\right)\cdot \dfrac{1}{\mathrm{reg}\left(\bigvee \mathscr{T}_{r_i}\right)}.
\end{equation*}
Therefore,
\begin{eqnarray*}
\prod^k_{i=1} \dfrac{\psi\left(A\left(\bigvee \mathscr{T}_{r_i}\right)\right)}{\mathrm{reg}\left(\bigvee \mathscr{T}_{r_i}\right)}&=&0^{|\ell|}\cdot \left(\prod_{w\in \mathrm{Iv}(\mathscr{T}) \atop w\neq r} \dfrac{\sigma(A(a_w,g_w))}{\mathrm{reg}(a_w)}\right)\cdot \dfrac{1}{\mathrm{reg}(a_r)}
\end{eqnarray*}

which implies that
\begin{eqnarray*}
\dfrac{\psi(A(a_r))}{\mathrm{reg}(a_r)}\cdot \sigma(A(a_r, g_r))&=&0^{|\ell|}\left(\prod_{w\in \mathrm{Iv}(\mathscr{T})\atop w\neq r} \dfrac{\sigma(A(a_w,g_w))}{\mathrm{reg}(a_w)}\right)\cdot \dfrac{\sigma(A(a_r,g_r))}{\mathrm{reg}(a_r)}\\
&=&\psi\left(A\left(\bigvee \mathscr{T}\right)\right).
\end{eqnarray*}
In conclusion, $\psi$ satisfies the recursive equation (\ref{espectro adyacencia Join arbol}), so $\psi=\sigma$.
\end{proof}

If we denotes by $a^+_v$ the ordered assembly of graphs whose components are the graphs in $a_v$ with at least two vertices, then the equation (\ref{espectro adyacencia sobre un arbol enriquecido}) may be simplified as follows:
\begin{equation}\label{espectro ady join arbol enriq simplificada}
\sigma(A(g))=\prod_{v\in \mathrm{Iv}(\mathscr{T})}\dfrac{\sigma(A(a_v, g_v))}{\mathrm{reg}(a^+_v)}.
\end{equation}
The quotient $\displaystyle{\dfrac{\sigma(A(a_v, g_v))}{\mathrm{reg}(a^+_v)}}$ on the right-hand side below is equal to $\sigma(A(a_v, g_v))=\sigma(A(g_v))$ only when $a_v$ is an assembly of singular graphs, that is, the childrens of $v$ are leaves.

\begin{example}
Let $\bigvee \mathscr{T}=g$ be the graph of Figure \ref{f5} and let $\textcolor[rgb]{0.00,0.50,0.25}{v_1}\textcolor[rgb]{0.98,0.00,0.00}{v_2v_3v_4}\textcolor[rgb]{0.00,0.00,1.00}{v_5v_6}$ be the total order on $\mathrm{Iv}(\mathscr{T})$ obtained after applying the depth-first search algorithm to $\mathscr{T}$. From equation (\ref{espectro ady join arbol enriq simplificada}) we have
\begin{eqnarray*}
    \sigma(A(g))&=&\sigma(A(a_{\textcolor[rgb]{0.00,0.50,0.25}{v_1}}, g_{\textcolor[rgb]{0.00,0.50,0.25}{v_1}}))\cdot \sigma(A(g_{\textcolor[rgb]{0.98,0.00,0.00}{v_2}})) \cdot \sigma(A(g_{\textcolor[rgb]{0.98,0.00,0.00}{v_3}}))\cdot \sigma(A(a_{\textcolor[rgb]{0.98,0.00,0.00}{v_4}}, g_{\textcolor[rgb]{0.98,0.00,0.00}{v_4}}))\cdot \sigma(A(g_{\textcolor[rgb]{0.00,0.00,1.00}{v_5}}))\cdot \sigma(A(g_{\textcolor[rgb]{0.00,0.00,1.00}{v_6}}))\\
    &=&\dfrac{\sigma\left(\begin{array}{ccc} 2 & \sqrt{6} & 0\\ \sqrt{6}&1&\sqrt{8}\\0&\sqrt{8}&3\end{array}\right)}{2\cdot 1\cdot 3}\cdot \sigma\left(\begin{array}{ccc} 0 & 1 & 1\\ 1& 0 & 1\\ 1&1&0\end{array}\right)\cdot \left(\sigma \left(\begin{array}{cc}
       0  & 1 \\
       1  & 0
    \end{array}\right)\right)^3 \cdot \dfrac{\sigma\left(\begin{array}{cc}1&2\\2&1\end{array}\right)}{1^2}\\
    &=&\dfrac{\lambda_1\cdot\lambda_2\cdot\lambda_3}{2\cdot 1\cdot 3}\cdot [(-1)^2\cdot 2]\cdot [(-1)^3\cdot (1)^3]\cdot \dfrac{1\cdot 3}{1^2}\cdot \\
&=&(-1)^5\cdot 1\cdot \lambda_1\cdot\lambda_2\cdot\lambda_3,
\end{eqnarray*}
where $\lambda_1\approx 5.65$, $\lambda_2\approx -2.05$ y  $\lambda_3\approx 2.40$.
\end{example}

 \subsection{Laplacian spectrum iteration}
Before initiating the study concerning the iteration of the Laplacian spectrum we will need the following definition.
\begin{definition}\label{desplazamiento monomial}
Let $\omega=0^{i_0}\lambda^{i_1}_1\cdots \lambda^{i_k}_{k}$ be a monomial with real variables. Let $s$ be a real number. The map $\omega \mapsto \varphi_s(\omega)$ is defined as
\begin{enumerate}
\item $(\lambda_1+s)^{i_1}(\lambda_2+s)^{i_2}\cdots (\lambda_k+s)^{i_k}$, if $i_0=0, 1$
\item $s^{i_0-1}(\lambda_1+s)^{i_2}\cdots (\lambda_k+s)^{i_k}$, if $i_0\geq 2$.
\end{enumerate}
\end{definition}
The operator $\varphi_s$ downgrade the multiplicity of $0$ by one (when it applies) and then add the value $s$ to each variable of $\omega$. It is straighforward to verify that $\varphi_{s_1}\circ \varphi_{s_2}=\varphi_{s_1+s_2}$, as soon $s_1, s_2$ are positive real numbers. However, we don't have equality if $s_2=0$ and $i_0\geq 2$.

Let  $\omega_1$ and $\omega_2$ be monomials of real numbers. If the multiplicity of the letter $0$ is null on $\omega_1$ or $\omega_2$, then
$\varphi_s(\omega_1\omega_2)=\varphi_s(\omega_1)\varphi_s(\omega_2)$. In general, the map $\varphi_s$ is not multiplicative.

Suppose as before that $\pi=(\ell_j)^k_{j=1}$ is a partition of $\ell$, $a=(g_{\ell_j})^k_{j=1}$ is an assembly of simple graphs and $h$ is a graph with vertex set $\pi$. From definition \ref{desplazamiento monomial} it follows that
\begin{equation}
\dfrac{\sigma(L(g_{\ell_j})+N_jI_{n_j})}{N_j}=\varphi_{N_j}(\sigma(L(g_{\ell_j}))).
\end{equation}
We will denotes convenietly the matrix $-\rho(\hat{\alpha})$ from theorem \ref{2.19} by $\mathscr{L}(h)$. In this way, the identity (\ref{espectro laplaciano}) can be stated as
\begin{equation}\label{varphi laplaciano}
\sigma\left( L\left(\bigvee_{h} a \right)\right)=\left(\prod^k_{j=1} \varphi_{N_j}(\sigma(L(g_{\ell_j})))\right)\cdot \sigma(\mathscr{L}(h))
\end{equation}
Notice that if $|\ell_j|=1$, then  $\varphi_{N_j}(\sigma(L(g_{\ell_j}))$ is the empty monomial. Therefore,
\begin{equation}\label{espectro Laplace simplificada}
\sigma\left( L\left(\bigvee_{h} a\right) \right)=\left(\prod_{1\leq j \leq k \atop |\ell_j|>1 } \varphi_{N_j}(\sigma(L(g_{\ell_j}))\right)\cdot \sigma(\mathscr{L}(h))
\end{equation}
Let $\mathscr{G}_c$ be the $\mathbb{L}$-species of connected simple graphs and $\mathscr{T}\in \mathscr{F}_{\mathscr{G}_c}$ a Schröder tree. Let $w,u\in \mathrm{Iv}(\mathscr{T})$ such that $u$ is a parent of $w$. Consider
\[N_w:=\sum_{\{\ell_x,\ell_w\} \in E(g_u)} |\ell_x|\]
The value $N_{w}$ is the external valency of the segment $\ell_w\in \pi_{u}$, according to the graph $g_u$. Let $r$ the root of $\mathscr{T}$,
$g=\bigvee \mathscr{T}$ and $w_1, w_2,..., w_k$ the internal vertices of $\mathscr{T}$ that are children $r$, from equation (\ref{espectro Laplace simplificada}) it follows that

\begin{equation}\label{espectro Laplace nivel 1}
\sigma (L(g))=\left(\prod^k_{j=1} \varphi_{N_{w_j}}\left(\sigma\left(L\left(\bigvee \mathscr{T}_{w_j}\right)\right)\right)\right)\cdot \sigma(\mathscr{L}(g_r)).
\end{equation}

In the other hand, if $r w_1\cdots w_k w$ is the path on $\mathscr{T}$ from the root of $r$ in $\mathscr{T}$
to the internal vertex $w$ on $\mathscr{T}$, we denotes by $N_{rw}$ the sum of all the external valencies $N_{w_i}$:
\[N_{rw}=N_{w_1}+N_{w_2}+\cdots + N_{w_k} + N_w.\]

It is not hard to verify that $N_{rw}$ is the external valency of the graph $g_{\ell_w}$, that is, $N_{rw}$ is the number of vertices of the graph $g$ which are adyacents to all elements of $\ell_w$:

\[N_{vw}=|\{x\in \ell : \{x, u\}\in E(g)\,\, \forall u\in \ell_w\}|\]

The iteration of equation (\ref{espectro Laplace nivel 1}) can be cumbersome in general, however, since tree $\mathscr{T}$ is enriched with connected graphs, the iteration of equation (\ref{espectro Laplace nivel 1}) is easy to perform from the properties of the function $\varphi_s$. In this way, we obtain the theorem

\begin{theorem}\label{Laplacian spectrum iterated}
Let $\ell$ be a non-singular linear order and $\mathscr{T}\in \mathscr{F}_{\mathscr{G}_c}[\ell]$ a factorization of the graph $g$, that is, $\displaystyle{\bigvee \mathscr{T}=g}$. If $r$ is the root of $\mathscr{T}$, then
\begin{equation}\label{lapaciano iterado}\sigma(L(g))=\left(\prod_{w\in \mathrm{Iv}(\mathscr{T})\atop w\neq r} \varphi_{N_{rw}}(\sigma(\mathscr{L}(g_w)))\right)\cdot \sigma(\mathscr{L}(g_r))\end{equation}
\end{theorem}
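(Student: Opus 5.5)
We argue by induction on the number of internal vertices of $\mathscr{T}$ (equivalently, on the height of $\mathscr{T}$), using the one-level identity (\ref{espectro Laplace nivel 1}) as the inductive step. All arguments of $\varphi$ here are nonnegative external valencies.

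\textbf{Base case.} Every child of the root $r$ is a leaf, so $\mathrm{Iv}(\mathscr{T})=\{r\}$. By (\ref{espectro Laplace simplificada}), every factor with $|\ell_j|=1$ is the empty monomial. Hence $\sigma(L(g))=\sigma(\mathscr{L}(g_r))$, which is exactly (\ref{lapaciano iterado}) with an empty product.

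\textbf{Inductive step.} Let $w_1,\dots,w_k$ be the internal children of $r$. Each subtree $\mathscr{T}_{w_j}$ is an element of $\mathscr{F}_{\mathscr{G}_c}[\ell_{w_j}]$ with fewer internal vertices and a non-singular leaf set. The induction hypothesis gives
\[\sigma\Big(L\Big(\bigvee\mathscr{T}_{w_j}\Big)\Big)=\Big(\prod_{w\in\mathrm{Iv}(\mathscr{T}_{w_j}),\,w\neq w_j}\varphi_{N_{w_jw}}(\sigma(\mathscr{L}(g_w)))\Big)\cdot\sigma(\mathscr{L}(g_{w_j})).\]
Substituting this into (\ref{espectro Laplace nivel 1}) requires evaluating $\varphi_{N_{w_j}}$ on the product. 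Two facts about $\varphi_s$ do the work.

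\emph{First fact: the zero multiplicities.} Since $g_{w_j}$ is connected, the quotient matrix $\mathscr{L}(g_{w_j})$ is the Laplacian of the weighted connected graph $g_{w_j}$ with weights $\sqrt{n_sn_t}$ on the off-diagonal entries. Its zero eigenvalue is simple, with eigenvector $(\sqrt{n_s})_s$. Each other factor $\varphi_{N_{w_jw}}(\cdot)$, for $w\ne w_j$, has $N_{w_jw}>0$, because connectivity of $g_u$ with at least two vertices makes every external valency positive. These factors are shifts of a nonnegative spectrum by a positive amount, so they contain no zero.

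\emph{Second fact: splitting $\varphi$ over the product.} By the multiplicativity remark following Definition \ref{desplazamiento monomial}, $\varphi_s(\omega_1\omega_2)=\varphi_s(\omega_1)\varphi_s(\omega_2)$ whenever one factor has no zero. So $\varphi_{N_{w_j}}$ distributes over the product. On $\sigma(\mathscr{L}(g_{w_j}))$ it deletes the single $0$ and shifts by $N_{w_j}$, giving $\varphi_{N_{w_j}}(\sigma(\mathscr{L}(g_{w_j})))=\varphi_{N_{rw_j}}(\sigma(\mathscr{L}(g_{w_j})))$. On each other factor, the composition rule $\varphi_{s_1}\circ\varphi_{s_2}=\varphi_{s_1+s_2}$ for positive $s_1,s_2$ applies, and $N_{w_j}+N_{w_jw}=N_{rw}$ by definition of the path sums. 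Collecting over $j$ then yields (\ref{lapaciano iterado}).

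\textbf{Main obstacle.} The delicate point is the bookkeeping of zeros. We must justify that $\varphi$ behaves multiplicatively and compositionally exactly where it is applied. This rests on the simplicity of the zero eigenvalue of $\mathscr{L}(g_w)$, which comes from connectivity together with the weighted-Laplacian form, and on the positivity of the $N$'s. I would state both facts as a preliminary lemma before running the induction. One further detail to check: when the composition rule is applied to a factor without zeros, $\varphi$ acts as a pure shift with no deletion, so the composition law holds even without the positivity caveat.
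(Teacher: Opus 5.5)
Your proposal is correct and is essentially the paper's proof. The paper's observation that the right-hand side $\psi$ satisfies the one-level recursion (\ref{espectro Laplace nivel 1}), and therefore equals $\sigma$, is exactly your induction, and it uses the same ingredients: multiplicativity of $\varphi_s$ against zero-free factors, the composition law, and the identity $N_{rx}=N_{w_j}+N_{w_jx}$.

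One small correction concerns your justification that the factors are zero-free. $\mathscr{L}(h)$ is not the weighted Laplacian with weights $\sqrt{n_sn_t}$: its diagonal entries are the external valencies $N_s=\sum_{t\sim s}n_t$, and a genuine weighted Laplacian of a connected graph would have kernel spanned by the all-ones vector rather than $(\sqrt{n_s})_s$. The nonnegativity of $\sigma(\mathscr{L}(g_w))$ that you actually need comes from Theorem \ref{2.19}, which exhibits $\sigma(\mathscr{L}(g_w))$ as a sub-multiset of the Laplacian spectrum $\sigma(L(g_{\ell_w}))$. Also, the simplicity of the zero eigenvalue, which you flag as the main obstacle, is never used anywhere in the argument.
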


\begin{proof}
Let $\psi$ the application sending $L(g)$ to \[\left(\prod_{w\in \mathrm{Iv}(\mathscr{T})\atop w\neq r} \varphi_{N_{rw}}(\sigma(\mathscr{L}(g_w))\right)\cdot \sigma(\mathscr{L}(g_r)),\]
If $w_1, w_2,..., w_k$ are the internal vertices of $\mathscr{T}$ which are children of $r$, then for each $j=1,2,...,k$ we have
\[\varphi_{N_{w_j}}\left(\psi \left(L\left(\bigvee \mathscr{T}_{w_j}\right)\right)\right)=\varphi_{N_{w_j}}\left(\left(\prod_{x\in \mathrm{Iv}(\mathscr{T}_{w_j})\atop x\neq w_j} \varphi_{N_{w_jx}}(\sigma(\mathscr{L}(g_x)))\right)\cdot \sigma(\mathscr{L}(g_{w_j}))\right).\]
As $N_w>0$ for all internal vertex of $\mathscr{T}$ such that $w\neq r$, then $N_{w_jx}>0$ for all $x\in (\mathrm{Iv}(\mathscr{T}_{w_j})-\{w_j\})$. Therefore, the multiplicity of the variable $0$ in $\varphi_{N_{w_jx}}(\sigma(\mathscr{L}(g_x))$ is zero for all $x\in (\mathrm{Iv}(\mathscr{T}_{w_j})-\{w_j\})$. From the properties of the function $\varphi_s$ it follows that

\begin{eqnarray*}\varphi_{N_{w_j}}\left(\psi \left(L\left(\bigvee \mathscr{T}_{w_j}\right)\right)\right)&=&\left(\prod_{x\in \mathrm{Iv}(\mathscr{T}_{w_j})\atop x\neq w_j} \varphi_{N_{w_j}+N_{w_jx}}(\sigma(\mathscr{L}(g_x)))\right)\cdot \varphi_{N_{w_j}}(\sigma(\mathscr{L}(g_{w_j})))\\
&=&\prod_{x\in \mathrm{Iv}(\mathscr{T}_{w_j})} \varphi_{N_{rx}}(\sigma(\mathscr{L}(g_x))).
\end{eqnarray*}

From here, we have
\begin{eqnarray*}
\left(\prod^k_{j=1}\varphi_{N_{w_j}}\left(\psi \left(L\left(\bigvee \mathscr{T}_{w_j}\right)\right)\right) \right)\cdot \sigma(\mathscr{L}(g_r))&=& \left(\prod_{x\in \mathrm{Iv}(\mathscr{T})\atop x\neq r} \varphi_{N_{rx}}(\sigma(\mathscr{L}(g_x)))\right)\cdot \sigma(\mathscr{L}(g_r))\\
&=&\psi(L(g))
\end{eqnarray*}
Since $\psi$ satisfy the recursive equation $(\ref{espectro Laplace nivel 1})$, then we concludes that $\psi=\sigma$.
\end{proof}

\begin{example}
Let $\bigvee \mathscr{T}=g$ be the graph of Figure \ref{f5} and let $\textcolor[rgb]{0.00,0.50,0.25}{v_1}\textcolor[rgb]{0.98,0.00,0.00}{v_2v_3v_4}\textcolor[rgb]{0.00,0.00,1.00}{v_5v_6}$ be the total order on $\mathrm{Iv}(\mathscr{T})$ obtained after applying the depth-first search algorithm to $\mathscr{T}$. From equation (\ref{lapaciano iterado}) it follows that
\begin{eqnarray*}
\sigma(L(g))&=&\sigma(\mathscr{L}(g_{\textcolor[rgb]{0.00,0.59,0.00}{v_1}}))\cdot \varphi_{N_{\textcolor[rgb]{0.00,0.59,0.00}{v_1}\textcolor[rgb]{0.98,0.00,0.00}{v_2}}}(\sigma(\mathscr{L}(g_{\textcolor[rgb]{0.98,0.00,0.00}{v_2}})))\cdot \varphi_{N_{\textcolor[rgb]{0.00,0.59,0.00}{v_1}\textcolor[rgb]{0.98,0.00,0.00}{v_3}}}(\sigma(\mathscr{L}(g_{\textcolor[rgb]{0.98,0.00,0.00}{v_3}})))\cdot \varphi_{N_{\textcolor[rgb]{0.00,0.59,0.00}{v_1}\textcolor[rgb]{0.98,0.00,0.00}{v_4}}}(\sigma(\mathscr{L}(g_{\textcolor[rgb]{0.98,0.00,0.00}{v_4}})))\\
&& \cdot \varphi_{N_{\textcolor[rgb]{0.00,0.59,0.00}{v_1}\textcolor[rgb]{0.00,0.00,1.00}{v_5}}}(\sigma(\mathscr{L}(g_{\textcolor[rgb]{0.00,0.00,1.00}{v_5}}))) \cdot\varphi_{N_{\textcolor[rgb]{0.00,0.59,0.00}{v_1}\textcolor[rgb]{0.00,0.00,1.00}{v_6}}}(\sigma(\mathscr{L}(g_{\textcolor[rgb]{0.00,0.00,1.00}{v_6}})))\\
&=&\sigma\left(\begin{array}{rrr}
2&-\sqrt{6} & 0 \\
-\sqrt{6}&7&-\sqrt{8}\\
0&-\sqrt{8}&2
\end{array}\right)\cdot \varphi_{\textcolor[rgb]{0.00,0.50,0.00}{2}}\left(\sigma\left(\begin{array}{rrr}
2&-1&-1 \\
-1&2&-1\\
-1&-1&2
\end{array}\right)\right)\cdot \varphi_{\textcolor[rgb]{0.00,0.50,0.00}{7}}\left(\sigma\left(\begin{array}{rr}
   1  &  -1\\
   -1  &  1\\
\end{array}\right)\right)\\
& &\cdot \varphi_{\textcolor[rgb]{0.00,0.50,0.00}{2}}\left(\sigma\left(\ \begin{array}{rr}
    2 & -2 \\
    -2 & 2
\end{array}\right)\right) \cdot \left(\varphi_{\textcolor[rgb]{1.00,0.00,0.00}{2}+\textcolor[rgb]{0.00,0.50,0.00}{2}}\left(\sigma\left(\ \begin{array}{rr}
    1 & -1 \\
    -1 & 1
\end{array}\right)\right)\right)^2\\
&=&(0\cdot 2\cdot 9)\cdot \varphi_2(0\cdot 3^2)\cdot \varphi_7(2\cdot 0)\cdot\varphi_2(0\cdot 4)\cdot (\varphi_4(2\cdot 0))^2\\
&=& 0\cdot 2\cdot 9 \cdot 5^2\cdot 9\cdot 6 \cdot 6^2 \\
&=& 0\cdot 2 \cdot 5^2 \cdot 6^3  \cdot 9^2
\end{eqnarray*}
\end{example}

\subsection{Colored iteration of Cardoso's formulas}
Let $E_+$ be the positive exponential $\mathbb{L}$-species ($E_+[\ell]=\{\ell\}$ with $|\ell|\geq 1$). Since every simple graph is an assembly of connected simple graphs, from the definition of substitution we get $E_+(\mathscr{G}_c)=\mathscr{G}_+$. In \cite{Miguel-Jean2014} the generalized composition of graphs is used to define the notion of \emph{divisibility} between assemblies of connected simple graphs. More precisely, we have the following definition.

\begin{definition}\label{divisibilidad}
Let $a_1, a_2$ be two ordered assemblies of connected simple graphs. We say that $a_1$ \emph{divides} $a_2$ if there exists a simple graph $h$ (not necessarily connected) with vertices on the subyacent partition of $a_1$ such that $\displaystyle{\bigvee_{h} a_1}=a_2$. This is to say, $a_1$ is a refinement of $a_2$ from the partitioned composition of graphs. In this case, $h$ is the quotient graph denoted by $a_2/a_1$.
\end{definition}

The colours on the edges of the graph $g$ on Figure \ref{f5} is not casual; this colouring is in natural correspondence with the factorization $\bigvee \mathscr{T}=g$. The idea is to color the internal vertices of a Schröder tree $\mathscr{T}\in \mathscr{F}_{\mathscr{G}_c}$ with the successor of the depth that these have in the tree, starting from the root which has color 1. Since each internal vertex is the instruction of the product $\bigvee$, the color of the internal vertex is given to the edges of the connected graph enriches it. After executing all the products indicated by each internal vertex, we obtain a graph whose edges are colored. This type of coloring on the graph has been studied in \cite{Miguel-Jean2014, LibroMiguel2015} and is called \emph{admissible colouration}.

\begin{figure}
    \centering
    \includegraphics[width=0.5\linewidth]{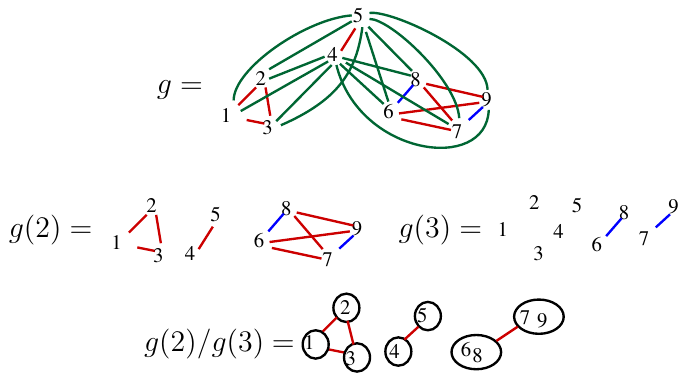}
    \caption{Subgraphs induced by admissible colouring}
    \label{f6}
\end{figure}

\begin{definition}\label{coloracion admisible grafos conexos} Let $c$ be a colouration on the edges of a connected graph $g$. That is, $c$ is a function from the set of edges of $g$ to the set of positive integers. For every $i>1$ on the image of $c$, we denotes by $g(i)$ the subgraph of $g$ obtained by erasing all edges of colour $j$, $1\leq j<i$. For $i=1$, let $g(1):=g$. The colouring $c$ is say to be admissible if:
\begin{enumerate}
\item for each $i\geq 2$ on the image of $c$, $g(i)$ divides $g(i-1)$.
\item if an incident edge of a vertex $v$ of $g$ has colour $i\geq 2$, then there exists at least one edge of colour $i-1$, also incident to $v$.
\end{enumerate}
\end{definition}

The Figure \ref{f6} illustrates the notion of subgraph induced by an admissible colouring. We denote by $a_c(i)$ the assembly formed by the connected graphs in $g(i)/g(i+1)$ that are not singular. For each $h\in a_c(i)$, let $N_h$ be the number of edges of $g$ with color $j<i$ that are adjacent to $\displaystyle{\bigcup_{\ell\in V(h)} \ell}$, and $g_h(i+1)$ the subassembly of $g(i+1)$ with partition underlying $V(h)$. If $r_i$ is the regularity of the graph $g_h=\displaystyle{\bigvee_{h}g_h(i+1)}$, from the equations (\ref{espectro ady join arbol enriq simplificada}), (\ref{lapaciano iterado}), we obtains the following result.

\begin{theorem}\label{colored spectrum}
Let $g$ be a connected simple graph and let $c:E(g)\longrightarrow \{1, 2, 3,..., k\}$ an admissible colouration over the edges of $g$.
\begin{enumerate}
\item If $g(i)$ is an assembly of regular connected graphs, for every $2\leq i\leq k$. Then,
   \begin{equation}\label{colored adjacent spectrum}
       \sigma(A(g))=\prod^{k}_{i=1}\prod_{h\in a_c(i)}\dfrac{\sigma(A(g_h(i+1), h))}{\mathrm{reg}(g^+_h(i+1))}
   \end{equation}

\item \begin{equation}\label{laplaciano coloreado}
\sigma(L(g))=\prod^k_{i=2}\left(\prod_{h\in a_c(i)} \varphi_{N_h}\left(\sigma\left(\mathscr{L}(h)\right)\right)\right)\cdot \sigma\left(\mathscr{L}\left(g(1)/g(2)\right)\right).
\end{equation}

\end{enumerate}

\end{theorem}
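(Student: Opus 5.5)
The plan is to reduce Theorem \ref{colored spectrum} to Theorems \ref{adjacency spectrum iterated} and \ref{Laplacian spectrum iterated}. To do this I will build from the admissible colouring $c$ a Schröder tree $\mathscr{T}_c\in\mathscr{F}_{\mathscr{G}_c}[\ell]$ with $\bigvee\mathscr{T}_c=g$. The internal vertices of depth $i$ (root at depth $1$) will be in bijection with the non-singular connected components $h\in a_c(i)$, and the graph enriching such a vertex will be $h$ itself.

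\textbf{Construction of $\mathscr{T}_c$.} This uses the two conditions of Definition \ref{coloracion admisible grafos conexos}.
\begin{enumerate}
\item Since $g(i+1)$ divides $g(i)$, Definition \ref{divisibilidad} gives a quotient graph $g(i)/g(i+1)$ whose vertices are the blocks of the partition underlying $g(i+1)$. Its edges are exactly the colour-$i$ edges, collapsed.
\item Each connected component $h$ of this quotient with at least two vertices becomes an internal vertex at depth $i$, enriched by $h$. Its children are the blocks in $V(h)$, that is, the components of the subassembly $g_h(i+1)$.
\item Singular components of the quotient are blocks that are not joined at level $i$. Condition 2 of admissibility then forces such a block to carry no edges of colour $>i$ either, so it is a leaf, or equivalently a chain of unary nodes that we suppress. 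This is what makes the nesting a genuine Schröder tree (every internal vertex has at least two children) enriched by connected graphs.
\end{enumerate}
Two checks remain. First, the refinement chain $g(1)\succ g(2)\succ\cdots\succ g(k+1)$ (discrete) makes the blocks nest, which gives the tree structure. Second, by associativity of $\eta$ and the recursion (\ref{operad product schroder tree}), $\bigvee\mathscr{T}_c=g$. Under this dictionary the objects correspond as follows: $a_v$ is $g_h(i+1)$, $g_v$ is $h$, and $g_{\ell_v}=\bigvee_h g_h(i+1)=g_h$. The root corresponds to $g(1)/g(2)$, which is connected because $g$ is connected.

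\textbf{Adjacency part.} The regularity hypothesis on each $g(i)$ says precisely that every assembly $a_v$ consists of regular graphs. Applying (\ref{espectro ady join arbol enriq simplificada}) and regrouping the product over $\mathrm{Iv}(\mathscr{T}_c)$ by depth $i$ gives (\ref{colored adjacent spectrum}), with $\mathrm{reg}(a_v^+)=\mathrm{reg}(g_h^+(i+1))$.

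\textbf{Laplacian part.} Apply Theorem \ref{Laplacian spectrum iterated} to $\mathscr{T}_c$. It remains to identify $N_{rw}$ for the vertex $w$ associated with $h\in a_c(i)$ with $N_h$. The paper notes that $N_{rw}$ is the number of vertices of $g$ adjacent to every vertex of $\ell_w$. These adjacencies come exactly from edges of colour $j<i$ at the ancestors along the path $r\to w$, and summing the external valencies level by level recovers $N_h$. I will therefore read $N_h$ as this accumulated external valency.

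\textbf{Expected obstacle.} I expect the main difficulty to be this bookkeeping: showing rigorously that admissibility yields a well-defined Schröder tree (the suppression of unary nodes via condition 2), and reconciling the paper's description of $N_h$ as counting edges with the vertex count $N_{rw}$. For the latter I would first try induction on depth, using $N_{rw}=N_{r w_1}+N_w$ together with the block structure of the join at each level.
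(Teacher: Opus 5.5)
Your proposal is correct and is essentially the paper's own argument. The paper likewise gets the theorem by reading the admissible colouring as a Schröder tree in $\mathscr{F}_{\mathscr{G}_c}$: vertices at depth $i$ correspond to $a_c(i)$, each is enriched by $h$, and its children are the components of $g_h(i+1)$. It then regroups the simplified iterated adjacency formula and Theorem~\ref{Laplacian spectrum iterated} by depth, with $N_h$ meaning the accumulated external valency $N_{rw}$, i.e.\ the number of colour-$<i$ edges at each vertex of $g_h$, exactly as you read it. Your explicit colouring-to-tree construction only makes precise the correspondence that the paper asserts. In particular, you use condition 2 to show that isolated vertices of $g(i)/g(i+1)$ are single vertices of $g$. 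Up to relabelling vertices so that the nested blocks are segments, which changes no spectrum, the argument is complete.
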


\subsection{The complement operation and the generalized composition of graphs}
 It is well known that if $g$ is a regular graph with $n$ vertices and the eigenvalues of $A(g)$ are $\lambda_1, \lambda_2,...,\lambda_{n-1}, \lambda_n=\mathrm{reg}(g)$, then the complement of the graph $g$ denoted by $g^c$ is an $(n-\mathrm{reg}(g)-1)$-regular graph and the eigenvalues of $A(g^c)$ are $-1-\lambda_1, -1-\lambda_2,...,-1-\lambda_{n-1}, n-\mathrm{reg}(g)-1$. On the other hand, the complement operation is an automorphism of the operad $(\mathscr{F}_{\mathscr{G}_+}, \bigvee)$, that is, if $\mathscr{T}=(T, (g_v)_{v\in \mathrm{Iv}(T)})$ belongs to $\mathscr{F}_{\mathscr{G_+}}[\ell]$ where $T\in\mathscr{F}[\ell]$, $n=|\ell|$ and $g_v\in \mathscr{G}_+[\pi_v]$, then the complement of $\mathscr{T}$ is the pair $(T, (g^c_v)_{v\in \mathrm{Iv}(T)})$, denoted by $\mathscr{T}^c$. If $g=\bigvee \mathscr{T}$, then
\[g^c=\left(\bigvee \mathscr{T}\right)^c=\bigvee \mathscr{T}^c.\]

 Suppose that $g_{\ell_v}$ is a regular graph for every $v\in \mathrm{Iv}(\mathscr{T})$, from generalized Fiedler's lemma (Cardoso et al., \cite{Cardoso2013}) one can verify that $\mathrm{reg}(g_{\ell_v})$ is an eigenvalue of $A(a_v, g_v)$ and from equation (\ref{espectro adyacencia Join arbol}) it follows that
\begin{equation}
\sigma(A(g^c_{\ell_v}))=\left(-1-\left(\dfrac{\sigma(A(a_v))}{\mathrm{reg}(a_v)}\right)\right)\cdot \left(-1-\left(\dfrac{\sigma(A(a_v,g_v))}{\mathrm{reg}(g_{\ell_v})}\right)\right)\cdot(|\ell_v|-\mathrm{reg}(g_{\ell_v})-1)
\end{equation}

where $-\sigma(B)$ denotes changing the signs of the factors of $\sigma(B)$. From this equation we deduce
\begin{equation}\label{ady externa complement}
\sigma(A(a_v^c, g_v^c))=-1-\left(\dfrac{\sigma(A(a_v,g_v))}{\mathrm{reg}(g_{\ell_v})}\right)\cdot(|\ell_v|-\mathrm{reg}(g_{\ell_v})-1).
\end{equation}

If $r$ is the root of $\mathscr{T}$, from equations (\ref{espectro ady join arbol enriq simplificada}) and (\ref{ady externa complement}) we deduce
\begin{eqnarray*}
  \sigma(A(g^c))&=&\prod_{v\in \mathrm{Iv}(\mathscr{T})\atop v\neq r}\left(-1-\left(\dfrac{\sigma(A(a_v,g_v))}{\mathrm{reg}(a^+_v)}\right)\right)\\
  &&\cdot\left(-1 -\left( \dfrac{\sigma(A(a_r,g_r))}{\mathrm{reg}(g)} \right)\right)
\cdot (n-\mathrm{reg}(g)-1)\\
\end{eqnarray*}
 which is rewritten as
\begin{equation}
  \sigma(A(g^c))=\prod_{v\in \mathrm{Iv}(\mathscr{T})}\left(-1-\left(\dfrac{\sigma(A(a_v,g_v))}{\mathrm{reg}(g_{\ell_v})}\right)\right)\\
\cdot (n-\mathrm{reg}(g)-1)
\end{equation}

The following corollaries were presented in \cite{Cardoso2013} as an immediate consequence of theorem \ref{2.19}, these results will allow us to interpret the iterated Laplacian spectrum of the complement of a graph.

\begin{corollary}\label{nulidad laplaciana}
Let $g=\displaystyle{\bigvee_h a}$, where $a=(g_{\ell_j})^k_{j=1}$ is an ordered assembly of simple graphs, $\pi=(\ell_j)^k_{j=1}$ is a segmented partition of the linear order $\ell$, and $h$ is a simple graph on $\pi$. The matrix $\mathscr{L}(h)$ is singular, and if $h$ is connected, then the algebraic multiplicity of the eigenvalue $0$ in $\mathscr{L}(h)$ is equal to $1$, i.e., the nullity of $\mathscr{L}(h)$ is 1.
\end{corollary}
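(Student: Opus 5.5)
The plan is to identify $\mathscr{L}(h)=-\rho(\hat{\alpha})$ with a weighted Laplacian conjugated by a positive diagonal matrix. Write $n_j=|\ell_j|$ and $D=\mathrm{diag}(\sqrt{n_1},\dots,\sqrt{n_k})$. By Theorem \ref{2.19} the diagonal entries are $N_j=\sum_{\{\ell_s,\ell_j\}\in E(h)} n_s$. The off-diagonal entries are $-\sqrt{n_sn_t}$ when $\{\ell_s,\ell_t\}\in E(h)$ and $0$ otherwise. Define the symmetric weight matrix $W$ by $W_{st}=n_sn_t$ on edges of $h$ and $0$ elsewhere, and let $L_W=\mathrm{diag}(\sum_t W_{st})-W$ be the associated weighted Laplacian. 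I will check entrywise that $\mathscr{L}(h)=D^{-1}L_WD^{-1}$:
\[
(D^{-1}L_WD^{-1})_{jj}=\frac{n_jN_j}{n_j}=N_j,\qquad (D^{-1}L_WD^{-1})_{st}=-\frac{n_sn_t}{\sqrt{n_sn_t}}=-\sqrt{n_sn_t}.
\]

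\emph{Singularity.} The vector $x=D\mathbf{1}_k=(\sqrt{n_1},\dots,\sqrt{n_k})^t$ satisfies $\mathscr{L}(h)x=D^{-1}L_W\mathbf{1}_k=0$, since every row of $L_W$ sums to zero. A direct check gives the same thing: the $j$-th entry is $N_j\sqrt{n_j}-\sum_{s\sim j}\sqrt{n_sn_j}\sqrt{n_s}=\sqrt{n_j}\,(N_j-\sum_{s\sim j}n_s)=0$. So $\mathscr{L}(h)$ is singular. Alternatively, one can invoke Theorem \ref{2.19} together with the fact that $0\in\sigma(L(g))$ and that $0$ is not in any factor $\varphi_{N_j}(\cdot)$ when $N_j>0$; I prefer the direct computation because it also covers the case where some $N_j=0$.

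\emph{Nullity when $h$ is connected.} Since $\mathscr{L}(h)$ is symmetric, algebraic and geometric multiplicities coincide, so it suffices to compute the kernel. Take the quadratic form with $y=D^{-1}z$:
\[
z^t\mathscr{L}(h)z=y^tL_Wy=\sum_{\{\ell_s,\ell_t\}\in E(h)} n_sn_t\,(y_s-y_t)^2\ \ge 0 .
\]
This shows $\mathscr{L}(h)$ is positive semidefinite. Hence $\mathscr{L}(h)z=0$ if and only if the quadratic form vanishes, which happens if and only if $y_s=y_t$ on every edge of $h$. When $h$ is connected, this forces $y$ to be constant, so $z\in\mathrm{span}(D\mathbf{1}_k)$, and the nullity is exactly $1$.

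The only delicate step is verifying the quadratic-form identity, which is the standard expansion of a weighted Laplacian form. Beyond that, the only care needed is invoking symmetry to pass from geometric to algebraic multiplicity. I expect no real obstacle.
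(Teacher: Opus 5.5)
Your proof is correct, but it takes a different route from the paper. The paper gives no separate argument. It quotes the statement from Cardoso et al.\ as an immediate consequence of Theorem~\ref{2.19}, i.e.\ one reads it off the factorization $\sigma(L(g))=\prod_j\varphi_{N_j}(\sigma(L(g_{\ell_j})))\cdot\sigma(\mathscr{L}(h))$.

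\textbf{The paper's route.} When $h$ is connected with $k\ge 2$, every $N_j>0$, so every letter of $\varphi_{N_j}(\sigma(L(g_{\ell_j})))$ is at least $N_j>0$. Since $g$ is then connected, the single zero of $L(g)$ must appear as a simple eigenvalue of $\mathscr{L}(h)$. For singularity in general one argues the same way when $h$ has no isolated vertices, and otherwise uses the zero row of $\mathscr{L}(h)$ at an isolated vertex. This comes for free once the generalized Fiedler lemma is available, and it shows where the zero of $L(g)$ sits inside the factorization, which is what the later complement formula uses. However, it depends on the fact that the Laplacian nullity counts connected components, and it needs the case split for isolated vertices of $h$.

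\textbf{Your route.} The congruence $\mathscr{L}(h)=D^{-1}L_WD^{-1}$ is self-contained and never involves $g$ or the internal graphs. It gives the explicit kernel vector $(\sqrt{n_1},\dots,\sqrt{n_k})^t$ and shows that $\mathscr{L}(h)$ is positive semidefinite. If you read the kernel condition ($y$ constant on each connected component of $h$) without assuming connectedness, it also proves that the nullity of $\mathscr{L}(h)$ equals the number of components of $h$. That is exactly Corollary~\ref{nulidad grafo externo} as well.
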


\begin{corollary}\label{nulidad grafo externo}
If $g=\displaystyle{\bigvee_h a}$, where $a=(g_{\ell_j})^k_{j=1}$ is an ordered assembly of simple graphs, $\pi=(\ell_j)^k_{j=1}$ is a segmented partition of the linear order $\ell$ and $h$ is a simple graph on $\pi$, then the matrices $\mathscr{L}(h)$ and $L(h)$ have the same nullity.
\end{corollary}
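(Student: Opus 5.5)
The plan is to show that $\mathscr{L}(h)$ is congruent to a weighted Laplacian of $h$ with strictly positive edge weights. Its nullity will then equal the number of connected components of $h$, which is also the nullity of $L(h)$. Recall that $\mathscr{L}(h)=-\rho(\hat{\alpha})$ is the $k\times k$ symmetric matrix with diagonal entries $N_j=\sum_{\{\ell_s,\ell_j\}\in E(h)} n_s$, off-diagonal entries $-\sqrt{n_sn_t}$ when $\{\ell_s,\ell_t\}\in E(h)$, and $0$ otherwise.

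First I compute the quadratic form. For $x\in\mathbb{R}^k$,
\[
x^t\mathscr{L}(h)x=\sum_{j=1}^k N_jx_j^2-2\sum_{\{\ell_s,\ell_t\}\in E(h)}\sqrt{n_sn_t}\,x_sx_t .
\]
Let $S=\mathrm{diag}(n_1,\dots,n_k)$ and substitute $x=S^{1/2}y$, i.e.\ $x_j=\sqrt{n_j}\,y_j$. Expanding $N_jn_jy_j^2=\sum_{s\sim j}n_sn_jy_j^2$, each edge $\{\ell_s,\ell_t\}$ contributes $n_sn_t(y_s^2+y_t^2)$ to the diagonal sum. Hence
\[
y^tS^{1/2}\mathscr{L}(h)S^{1/2}y=\sum_{\{\ell_s,\ell_t\}\in E(h)} n_sn_t\,(y_s-y_t)^2 .
\]
So $S^{1/2}\mathscr{L}(h)S^{1/2}=L_w(h)$, the Laplacian of $h$ with edge weights $w_{st}=n_sn_t>0$. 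In particular $\mathscr{L}(h)$ is positive semidefinite.

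Since $S^{1/2}$ is invertible, $\mathscr{L}(h)$ and $L_w(h)$ have the same rank, and hence the same nullity. For a positive semidefinite matrix, $y^tL_wy=0$ if and only if $L_wy=0$. Because every weight is positive, this happens exactly when $y_s=y_t$ for every edge, i.e.\ when $y$ is constant on each connected component of $h$. So the nullity of $L_w(h)$ is the number $c(h)$ of connected components of $h$.

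The same argument with all weights equal to $1$ gives the familiar fact that the nullity of $L(h)$ is also $c(h)$. Therefore $\mathscr{L}(h)$ and $L(h)$ have the same nullity, and Corollary \ref{nulidad laplaciana} is recovered as the case $c(h)=1$. Since everything is symmetric, algebraic and geometric multiplicities of $0$ coincide. The only real obstacle is the bookkeeping in the diagonal identity $\sum_j N_jn_jy_j^2=\sum_{\text{edges}}n_sn_t(y_s^2+y_t^2)$, which is a direct double count over edges.
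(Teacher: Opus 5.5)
Your proof is correct, but it takes a different route from the paper's. The paper gives no argument of its own. It attributes the statement to Cardoso et al.\ as an immediate consequence of Theorem \ref{2.19}: one reads off the zero eigenvalues of $L(g)$ from the factorization (\ref{espectro laplaciano}).

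Concretely, $\mathscr{L}(h)$ depends only on $h$ and the sizes $n_j$, so one may take every $g_{\ell_j}$ connected. Then no factor $\sigma(L(g_{\ell_j})+N_jI_{n_j})/N_j$ contains a zero:
\begin{itemize}
\item if $N_j>0$, all eigenvalues are at least $N_j$;
\item if $N_j=0$, the eigenvalue $0$ is simple in $L(g_{\ell_j})$ and has been removed.
\end{itemize}
Hence the nullity of $\mathscr{L}(h)$ equals the number of connected components of $g$, which is $c(h)$.

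Your argument bypasses Fiedler's lemma and the graph $g$ entirely. The congruence $S^{1/2}\mathscr{L}(h)S^{1/2}=L_w(h)$, with edge weights $n_sn_t>0$, is a purely linear-algebraic identity, and it gives more than the nullity:
\begin{itemize}
\item $\mathscr{L}(h)$ is positive semidefinite;
\item its kernel is explicit: the vectors with $x_j=\sqrt{n_j}\,y_j$ and $y$ constant on components, e.g.\ $(\sqrt{n_1},\dots,\sqrt{n_k})$ when $h$ is connected;
\item the nullity is visibly independent of the segment sizes;
\item Corollary \ref{nulidad laplaciana} comes out as the case $c(h)=1$.
\end{itemize}

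The paper's route is shorter once Theorem \ref{2.19} is available, and it fits the paper's theme of extracting information from spectral factorizations. However, it relies on the generalized Fiedler lemma and on counting the components of $g$. That count needs some care at isolated vertices of $h$ (where $N_j=0$) unless one first reduces to connected $g_{\ell_j}$.

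Your quadratic-form identity is the same device the paper itself uses later, in the proof of Proposition \ref{autprin}.
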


The following proposition can be checked directly from the definition of the Laplacian matrix of a graph.
\begin{proposition}\label{autovector principal}
    Let $g$ be a simple graph with $n$ vertices.
\begin{enumerate}
    \item $0$ is an eigenvalue of $L(g)$ and $L(g^c)$ with eigenvector $1_{n\times 1}$.
    \item If $\lambda$ is an eigenvalue of $L(g)$ with eigenvector $\mathrm{v}\neq 1_{n\times 1}$, then $n-\lambda$ is an eigenvalue of $L(g^c)$ with the same eigenvector $\mathrm{v}$.
\end{enumerate}
\end{proposition}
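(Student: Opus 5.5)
The plan is to reduce everything to the single matrix identity
\[
L(g)+L(g^c)=L(K_n)=n I_n-1_{n\times n},
\]
where $K_n$ is the complete graph on the same $n$ vertices. To check it entrywise, note that every pair $\{v_i,v_j\}$ with $i\neq j$ is an edge of exactly one of $g$ and $g^c$. Hence the off-diagonal entries add up to $-1$. On the diagonal we get $val_g(v_i)+val_{g^c}(v_i)=n-1$, which is the diagonal entry of $nI_n-1_{n\times n}$. Both sides are symmetric, so this settles the identity.

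For item 1, I use only the definition of the Laplacian. In every row of $L(g)$ the diagonal entry $val(v_i)$ cancels the $val(v_i)$ entries equal to $-1$, so each row sums to zero. This says exactly that $L(g)1_{n\times 1}=0$. The same argument applied to $g^c$, which is again a simple graph, gives $L(g^c)1_{n\times 1}=0$.

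For item 2, I first arrange for the eigenvector $\mathrm{v}$ to be orthogonal to $1_{n\times 1}$. There are two cases.
\begin{enumerate}
\item If $\lambda\neq 0$, this holds automatically. Eigenvectors of the symmetric matrix $L(g)$ belonging to distinct eigenvalues are orthogonal, and $1_{n\times 1}$ belongs to the eigenvalue $0$.
\item If $\lambda=0$ and $\mathrm{v}$ is not a multiple of $1_{n\times 1}$, I replace $\mathrm{v}$ by $\mathrm{v}-\frac{1_{n\times 1}^t\mathrm{v}}{n}1_{n\times 1}$. This vector is nonzero, lies in the same eigenspace, and is orthogonal to $1_{n\times 1}$.
\end{enumerate}
I read the hypothesis ``$\mathrm{v}\neq 1_{n\times 1}$'' as ``$\mathrm{v}$ is not a multiple of $1_{n\times 1}$''. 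The only delicate point of the proof is this interpretation together with the orthogonal-projection adjustment in the case $\lambda=0$; the statement as written is literally false for vectors like $2\cdot 1_{n\times 1}$.

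Once $\mathrm{v}\perp 1_{n\times 1}$, we have $1_{n\times n}\mathrm{v}=(1_{n\times 1}^t\mathrm{v})\,1_{n\times 1}=0$. Therefore
\[
L(g^c)\mathrm{v}=(nI_n-1_{n\times n}-L(g))\mathrm{v}=n\mathrm{v}-\lambda \mathrm{v}=(n-\lambda)\mathrm{v}.
\]
This shows that $n-\lambda$ is an eigenvalue of $L(g^c)$ with eigenvector $\mathrm{v}$. Since there is one such eigenvector for each $\lambda$ in an orthonormal eigenbasis of $L(g)$ completing $1_{n\times 1}/\sqrt n$, multiplicities are preserved as well.
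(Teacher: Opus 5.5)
Your argument is correct and matches the paper, which only says the proposition ``can be checked directly from the definition of the Laplacian matrix''; the identity $L(g)+L(g^c)=nI_n-1_{n\times n}$ is the natural way to carry out that check. One caveat concerns your diagnosis of the hypothesis. Reading ``$\mathrm{v}\neq 1_{n\times 1}$'' as ``$\mathrm{v}$ not a multiple of $1_{n\times 1}$'' still leaves the ``same eigenvector'' claim false when $\lambda=0$ and $g$ is disconnected: for $g$ two isolated vertices and $\mathrm{v}=(1,0)^t$ one gets $L(g^c)\mathrm{v}=(1,-1)^t\neq 2\mathrm{v}$. So the correct hypothesis is $\mathrm{v}\perp 1_{n\times 1}$. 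In your case $\lambda=0$ you prove that $n$ is an eigenvalue of $L(g^c)$ for the projected vector, not for $\mathrm{v}$ itself. That weaker conclusion is all that is true, and together with your multiplicity remark it is what the paper actually needs for $\sigma(L(g^c))$.
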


For the Laplacian case, consider $\mathscr{T}\in \mathscr{F}_{\mathscr{G}_c}[\ell]$. From Corollaries \ref{nulidad grafo externo}, \ref{nulidad laplaciana}, equation (\ref{lapaciano iterado}) and Proposition \ref{autovector principal} it follows that

\begin{equation}
    \sigma(L(g^c))=0\cdot \left(\prod_{w\in \mathrm{Iv}(\mathscr{T})\atop w\neq r}\varphi_{_{n-N_{rw}}}(-\sigma(\mathscr{L}(g_w)))\right)\cdot \varphi_n(-\sigma(\mathscr{L}(g_r))).
\end{equation}

\section{The generalized characteristic polynomial of a graph and the operad $(\mathscr{G}_+, \bigvee)$}
The generalized characteristic polynomial of a graph $g$ is a bivariate polynomial $\phi_g(x, t)$ defined in \cite{LibroCvetkovic} by:
\begin{equation}\label{polinomiogeneral}
    \phi_g(x,t)=\det(xI-(A(g)-tD(g))).
\end{equation}

For certain values in $x, t$, the polynomial $\phi_g(x,t)$ generalizes the characteristic polynomials of $A(g)$ and $L(g)$. That is, if $\chi(B,x)$ denotes the characteristic polynomial of a square matrix in the variable $x$, then
\begin{eqnarray*}
    \chi(A(g), x)&=&\phi_g(x,0),\\
    \chi(L(g), x)&=&(-1)^n\phi_g(-x,1).
\end{eqnarray*}
If $a$ is an assembly of graphs, from equation (\ref{polinomiogeneral}) it follows that
\[\phi_a(x, t)=\prod_{g\in a}\phi_g(x,t).\]
As in Proposition \ref{Matriz join}, consider $g=\displaystyle{\bigvee_h a}$ where $a=(g_{\ell_j})^k_{j=1}$ is an assembly of $r_j$-regular graphs, $n_j=|\ell_j|$, $h$ a simple graph on $\pi=(\ell_j)^k_{j=1}$ and $\displaystyle{N_i=\sum_{\{\ell_s, \ell_i\}\in E(h)} n_s}$. Denote by $D(a,h)$ the diagonal matrix of order $k\times k$ given by $D(a, h)_{ij}=\delta_{ij}(r_i+N_i)$. Denote by $\phi_{(a,h)}(x,t)$ the characteristic polynomial in the variable $x$ corresponding to the matrix $A(a,h)-tD(a,h)$, that is
\[\phi_{(a,h)}(x,t)=\chi(A(a,h)-tD(a,h), x).\]
Note that if $a$ is an assembly of singular graphs, then $\phi_{(a,h)}(x,t)=\phi_h(x,t)$ and if $h$ is an assembly of singular graphs then $\phi_{(a, h)}(x,t)=\prod^k_{j=1}(x-r_j+tr_j)$.
Note that $1_{n_j\times 1}$ is an eigenvector of the matrix $A(g_{\ell_j})-tD(g_{\ell_j})$ with corresponding eigenvalue $r_j-tr_j$ and consequently the polynomial $x-(r_j-tr_j)=x-r_j+tr_j$ divides $\phi_{g_{\ell_j}}(x,t)$. Assuming that $a$ is an assembly of connected graphs and $h$ is a connected graph, Yu Chen and Haiyan Chen in \cite{YuHaiyanchen2019} obtain as a main result the equality
\begin{equation}\label{Chenfactor}
    \phi_g(x,t)=\phi_{(a,h)}(x,t)\prod^k_{j=1}\dfrac{\phi_{g_{\ell_j}}(x+tN_j,t)}{x-r_j+t(r_j+N_j)}.
\end{equation}

This equation shows that divisibility between assemblies of connected regular graphs, as in Definition \ref{divisibilidad}, is preserved in the context of generalized characteristic polynomials. Specifically, if $a_1$ and $a_2$ are assemblies of connected graphs such that the components of $a_1$ are regular graphs and $a_1$ divides $a_2$ ($a_1$ is a refinement of $a_2$), then each connected component of $a_2$ is $g^h_2=\displaystyle{\bigvee_{h} a^h_1}$ where $h$ is a connected component of the quotient graph $a_2/a_1$ and $a^h_1$ is the subassembly of connected graphs of $a_1$ with underlying partition equal to $V(h)$. Due to equation (\ref{Chenfactor}) each polynomial $\phi_{(a^h_1,h)}(x, t)$ divides $\phi_{a_2}(x,t)$ and therefore $\phi_{(a_1, a_2/a_1)}(x,t)$ divides $\phi_{a_2}(x,t)$.
Indeed, if $a_1=(g_{\ell_j})^k_{j=1}$, then
\begin{eqnarray*}
    \phi_{a_2}(x, t)&=&\prod_{h\in a_2/a_1} \phi_{g^h_2}(x,t)\\
    &=&\prod_{h\in a_2/a_1} \phi_{(a^h_1, h)}(x,t)\prod_{\ell_j\in V(h)}\dfrac{\phi_{g_{\ell_j}}(x+tN_j,t)}{x-r_j+t(r_j+N_j)}\\
    &=&\phi_{(a_1, a_2/a_1)}(x,t)\prod^k_{j=1}\dfrac{\phi_{g_{\ell_j}}(x+tN_j,t)}{x-r_j+t(r_j+N_j)}
\end{eqnarray*}

From the previous equality we get:
\begin{proposition}
Let $a_1, a_2, a_3$ be assemblies of connected graphs such that the components of $a_1$ and $a_2$ are regular. If $a_1$ divides $a_2$ and $a_2$ divides $a_3$, then
    $\phi_{(a_2, a_3/a_2)}(x,t)$ divides $\phi_{(a_1, a_3/a_1)}(x,t)$.
\end{proposition}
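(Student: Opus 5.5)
We apply the displayed factorization of $\phi_{a_2}$ to the three pairs $(a_1,a_2)$, $(a_2,a_3)$ and $(a_1,a_3)$ and compare the resulting expressions for $\phi_{a_3}$. Divisibility is transitive: if $a_1$ refines $a_2$ and $a_2$ refines $a_3$, then $a_1$ refines $a_3$. This follows from associativity of the operad product $\bigvee$, since composing the quotient graphs gives a graph $a_3/a_1$ on the partition underlying $a_1$. Write $a_1=(g_{\ell_j})_{j}$ with regularities $r_j$, and $a_2=(f_{m_i})_i$ with regularities $s_i$. Let $N_j$, $M_i$, $P_j$ be the external valencies of $\ell_j$ in $a_2/a_1$, of $m_i$ in $a_3/a_2$, and of $\ell_j$ in $a_3/a_1$, respectively. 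Two applications of the factorization give
\[
\phi_{a_3}=\phi_{(a_2,a_3/a_2)}(x,t)\prod_i \frac{\phi_{f_{m_i}}(x+tM_i,t)}{x-s_i+t(s_i+M_i)},
\qquad
\phi_{a_3}=\phi_{(a_1,a_3/a_1)}(x,t)\prod_j \frac{\phi_{g_{\ell_j}}(x+tP_j,t)}{x-r_j+t(r_j+P_j)}.
\]

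Next we expand each $\phi_{f_{m_i}}(x+tM_i,t)$ using the factorization for $a_1$ dividing $a_2$, restricted to the component $f_{m_i}$, with $x$ replaced by $x+tM_i$. For every $\ell_j\subseteq m_i$ we have $P_j=N_j+M_i$: a vertex adjacent to all of $\ell_j$ is either inside $m_i$, where it is counted by $N_j$, or outside $m_i$, where it is counted by $M_i$. This is the same additivity of external valencies used for $N_{rw}$ before Theorem \ref{Laplacian spectrum iterated}. Hence
\[
\phi_{f_{m_i}}(x+tM_i,t)=\phi_{(a_1^{m_i},h_i)}(x+tM_i,t)\prod_{\ell_j\subseteq m_i}\frac{\phi_{g_{\ell_j}}(x+tP_j,t)}{x-r_j+t(r_j+P_j)},
\]
where $h_i$ is the component of $a_2/a_1$ carrying $m_i$.

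Substituting this into the first expression and equating it with the second, the factors $\prod_j \phi_{g_{\ell_j}}(x+tP_j,t)/(x-r_j+t(r_j+P_j))$ cancel. This leaves
\[
\phi_{(a_1,a_3/a_1)}(x,t)=\phi_{(a_2,a_3/a_2)}(x,t)\prod_i\frac{\phi_{(a_1^{m_i},h_i)}(x+tM_i,t)}{x-s_i+t(s_i+M_i)}.
\]
The claim then reduces to showing that each quotient on the right is a genuine polynomial. That is, $x-s_i+t(s_i+M_i)$ must divide $\phi_{(a_1^{m_i},h_i)}(x+tM_i,t)$. Equivalently, $x-s_i+ts_i$ must divide $\phi_{(a_1^{m_i},h_i)}(x,t)=\chi(A(a_1^{m_i},h_i)-tD(a_1^{m_i},h_i),x)$.

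This divisibility is the main obstacle. We plan to show that the vector $w=(\sqrt{n_j})_{\ell_j\subseteq m_i}$ is an eigenvector of $A(a_1^{m_i},h_i)-tD(a_1^{m_i},h_i)$ with eigenvalue $s_i-ts_i$. For the adjacency part we use $\rho_{jk}=\sqrt{n_jn_k}$: the $j$-th entry of $A(a_1^{m_i},h_i)w$ is $r_j\sqrt{n_j}+\sqrt{n_j}N_j=\sqrt{n_j}\,(r_j+N_j)=\sqrt{n_j}\,s_i$. Here we use that $f_{m_i}$ is $s_i$-regular, so $r_j+N_j=s_i$ for all $j$. For the degree part, $D(a_1^{m_i},h_i)=\mathrm{diag}(r_j+N_j)=s_iI$, so $w$ is also an eigenvector of it, with eigenvalue $s_i$. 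Hence $(x-s_i+ts_i)$ divides the characteristic polynomial, each quotient is a polynomial, and $\phi_{(a_2,a_3/a_2)}$ divides $\phi_{(a_1,a_3/a_1)}$.

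We also need the hypotheses of the Chen–Chen factorization, namely connected components and a connected external graph, to hold componentwise. This is routine, since each component of $a_3/a_1$ restricted to a block is connected whenever the composed graph is connected.
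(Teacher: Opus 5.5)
Your proposal is correct and follows essentially the same route as the paper. You expand $\phi_{a_3}$ once through $a_2$ (refining each $\phi_{f_{m_i}}$ through $a_1$) and once directly through $a_1$, use the additivity $P_j=N_j+M_i$ of external valencies, and cancel the common factor $\prod_j \phi_{g_{\ell_j}}(x+tP_j,t)/(x-r_j+t(r_j+P_j))$.

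Your eigenvector check with $w=(\sqrt{n_j})_{\ell_j\subseteq m_i}$, showing that $x-s_i+t(s_i+M_i)$ divides $\phi_{(a_1^{m_i},h_i)}(x+tM_i,t)$, adds something the paper omits. The paper's proof stops at the final identity without checking that the leftover quotient is a polynomial, and that check is exactly what divisibility requires.
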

\begin{proof}
   \begin{eqnarray*}
        \phi_{a_3}(x,t)&=&\phi_{(a_2, a_3/a_2)}(x, t)\prod_{h\in a_2}\dfrac{\phi_h(x+tN_h,t)}{x-r_h+t(r_h+N_h)}\\
        &=&\phi_{(a_2, a_3/a_2)}(x, t)\prod_{h\in a_2}\dfrac{\phi_{(a^h_1, h/a^h_1)}(x+tN_h,t)}{x-r_h+t(r_h+N_h)}\prod_{g\in a^h_1}\dfrac{\phi_g(x+tN_h+tN_g,t)}{x-r_g+t(r_g+N_h+N_g)}\\
\end{eqnarray*}
In the equations above, $h\in a_2$ is a connected graph, $N_h$ is the external regularity of $h$ with respect to the quotient $a_3/a_2$. $a^h_1$ is the subassembly of $a_1$ whose vertex set is $V(h)$. $N_g$ is the external regularity of the connected graph $g\in a^h_1$ with respect to the quotient $h/a^h_1$, and $r_g, r_h$ are the respective regularities of the graphs $h$ and $g$. On the other hand
\begin{eqnarray*}
    \phi_{a_3}(x,t)&=&\phi_{(a_1, a_3/a_1)}(x, t)\prod_{g\in a_1}\dfrac{\phi_g(x+tN'_g,t)}{x-r_g+t(r_g+N'_g)},
\end{eqnarray*}
where $N'_g$ is the external regularity of the connected graph $g\in a_1$ with respect to the quotient $a_3/a_1$. Because the product $\bigvee$ is associative and satisfies the cancellation law, it follows that $N'_g=N_h+N_g$. Finally,
\begin{eqnarray*}
    \phi_{(a_1, a_3/a_1)}(x, t)&=&\phi_{(a_2, a_3/a_2)}(x, t)\prod_{h\in a_2} \dfrac{\phi_{(a^h_1, h/a^h_1)}(x+tN_h,t)}{x-r_h+t(r_h+N_h)}.
\end{eqnarray*}
\end{proof}
In \cite{LibroMiguel2015} it is shown that divisibility between assemblies of elements of an operad defines partially ordered sets; in particular the set of assemblies of simple graphs with vertices in $\ell$, $\mathscr{G}_+[\ell]$ with the divisibility relation is a poset. Moreover, if $a_3\in \mathscr{G}_+[\ell]$, then the set of quotient graphs $\mathcal{P}(a_3)=\{a_3/a : a \text{ divides } a_3, a \text{ is regular}\}$ together with the binary relation
$a_3/a_2 \leq a_3/a_1$ whenever $a_1$ divides $a_2$ defines a partial order on $\mathcal{P}(a_3)$. The polynomial ring modulo associates, $\mathbb{R}[x,t]/\sim$, together with divisibility, is a poset. In this context, if $\phi: \mathcal{P}(a_3)\longrightarrow \mathbb{R}[x,t]/\sim$ is defined by $\phi(a_3/a)=\phi_{(a, a_3/a)}(x,t)$, then the previous proposition states that $\phi$ is a morphism of posets.

If we iterate the factors on the right-hand side of equation (\ref{Chenfactor}), we can obtain a generalization of this equation using the notation and terminology presented previously, where the iterations of the product $\bigvee$ are described using the species of Schröder trees $\mathscr{F}_{\mathscr{G}_c}$. Indeed, if $g=\bigvee \mathscr{T}$, where $\mathscr{T}\in \mathscr{F}_{\mathscr{G}_c}[\ell]$, $r$ is the root of $\mathscr{T}$, and for each $w\in (\mathrm{Iv}(\mathscr{T})-\{v\})$ the graph $g_{\ell_w}$ has regularity $r_w$, then
\begin{equation}\label{polinomio generalizado iterado}
    \phi_g(x,t)=\left(\prod_{w\in \mathrm{Iv}(\mathscr{T})\atop w\neq r} \dfrac{\phi_{(a_w, g_w)}(x+tN_{rw},t)}{x-r_w+t(r_w+N_{rw})}\right)\phi_{(a_r, g_r)}(x,t).
\end{equation}
The proof of this equality follows in the same spirit as the proofs of Theorems \ref{adjacency spectrum iterated} and \ref{Laplacian spectrum iterated}.
Following the terminology of the theorem (\ref{colored spectrum}), given an admissible coloration $c: E(g)\longrightarrow \{1,2,...,k\}$ with the property that for $i\geq 2$, $g(i)$ is the assembly of connected regular graphs, using the coloration $c$, the equation (\ref{polinomio generalizado iterado}) is equivalent to
\begin{equation}\label{polinomio generalizado coloreado}
    \phi_g(x,t)=\left(\prod^k_{i=2}\prod_{h\in a_c(i)} \dfrac{\phi_{(g_h(i+1), h)}(x+tN_h,t)}{x-r_i+t(r_i+N_h)}\right) \phi_{(g(2), g/g(2))}(x,t).
\end{equation}

\section{The universal characteristic polynomial of a graph and the operad $(\mathscr{G}_+, \bigvee)$}

In \cite{Saravanan2021}, the universal adjacency matrix of a graph $g$ is defined as
\[U(g)=\alpha A(g)+ \beta I + \gamma J+ \delta D(g)\]
where \( \alpha, \beta, \gamma, \delta \) are real values with \( \alpha \neq 0 \), \( I \) is the identity matrix, and \( J \) is the matrix with all entries equal to 1, both of the same order as \( A(g) \). Note that the adjacency matrix \( A(g) \), the Laplacian matrix \( L(g) \), the signless Laplacian matrix \( Q(g) = D(g) + A(g) \), and the Seidel matrix \( S(g) = J - I - 2A(g) \) are obtained from \( U(g) \) for appropriate values of \( \alpha, \beta, \gamma, \delta \).

Let $\mathbb{C}(x)$ be the field of rational functions with coefficients in $\mathbb{C}$. Note that every square matrix $M$ has a non-zero characteristic polynomial, and therefore $xI-M$ is invertible over the field $\mathbb{C}(x)$. Let $u$ and $v$ be vectors with complex entries and the same number of rows as $M$. In \cite{Saravanan2021}, the \textit{main function} associated with the matrix $M$ corresponding to the vectors $u$, $v$ is defined as $\Gamma_M(u, v, x)=v^t(xI-M)^{-1}u$. When $u$ and $v$ are equal to the vector with all entries equal to 1 and $M$ is the universal adjacency matrix of a graph $g$, this function will be denoted by $\Gamma_g(x)$ for simplicity.

Consider as before $(a, h)\in \mathscr{G}_+(\mathscr{G}_+)[\ell]$ with $a=(g_{\ell_i})^k_{i=1}$, $\displaystyle{g=\bigvee_{h}a}$, $n_i=|\ell_i|$, $\displaystyle{N_i=\sum_{\{\ell_i, \ell_s\}\in E(h)} n_s}$, we define the \emph{main diagonal matrix}, $D_{\Gamma}(a, h)(x)$ as the diagonal matrix of order $k\times k$ that has in the entry $ii$ the function $\frac{1}{\Gamma_{g_{\ell_i}}(x-\delta N_i)} $, we also define the universal matrix of the factorization $(a, h)$ of $g$ as \[U_{\Gamma}(a, h)(x)=-\alpha A(h)+ \gamma I_k-\gamma J_k+ D_{\Gamma}(a, h)(x)\] The characteristic polynomial of the matrix $U(g)$ will be denoted by $\Phi_g(x)$, in \cite{Saravanan2021} they present a main result that generalizes Fiedler's lemma and from this they obtain that
\begin{equation}\label{polinomialuni}
\Phi_g(x)=\left(\prod^k_{i=1}\Phi_{g_{\ell_i}}(x-\delta N_i)\Gamma_{g_{\ell_i}}(x-\delta N_i)\right)det(U_{\Gamma}(a, h)(x))
\end{equation}

Assuming that $a$ is an assembly of regular graphs, in \cite{Saravanan2021} it is also obtained as a result that $p_i=\alpha\,\mathrm{reg}(g_{\ell_i})+\beta+\gamma n_i+\delta(\mathrm{reg}(g_{\ell_i})+N_i)$ is an eigenvalue of $U(g_{\ell_i})+\delta N_i I_{n_i}$. If $U(a, h)$ is the matrix of order $k\times k$ defined by \[(U(a, h))_{ij}=\sqrt{n_in_j}(\alpha A(h)_{ij}+\gamma(1-\delta_{ij}))+ \delta_{ij}p_i\] where $\delta_{ij}$ is the Kronecker delta and $\mu_{\delta N_i}(\sigma(U(g_{\ell_i})))$ is the word that results from degrading the multiplicity of $p_i-\delta N_i$ by one when this applies and then adds $\delta N_i$ to each variable of $\sigma(U(g_{\ell_i}))$. Then
\begin{equation}\label{spectrauni}
\sigma(U(g))=\left(\prod^k_{i=1}\mu_{\delta N_i}(\sigma(U(g_{\ell_i})))\right)\sigma(U(a,h))
\end{equation}

These equations tell us that graph factorizations using the operad $(\mathscr{G}_+, \bigvee)$ also factor the characteristic polynomials and spectra of universal matrices. Note that when $n_i=1$, it follows that $\Phi_{g_{\ell_i}}\Gamma_{g_{\ell_i}}=1$, and if $a$ is an assembly of singular graphs, it follows that $U_{\Gamma}(a, h)(x)=xI_n-U(g)$ and $U(a,h)=U(g)$. Let $\mathscr{T}\in \mathscr{F}_{\mathscr{G}_+}[\ell]$ be a factorization of a graph $g$, that is, $\displaystyle{\bigvee \mathscr{T}}=g$. Following reasoning similar to the proof of Theorem \ref{adjacency spectrum iterated}, it follows that by iterating equation (\ref{polinomialuni}), we obtain:
\begin{equation}\label{polinomio universal iterado}
    \Phi_g(x)=\left(\prod_{w\in \mathrm{Iv}(\mathscr{T})\atop w\neq r} \Gamma_{g_{\ell_w}}(x-\delta N_{rw})\,det(U_{\Gamma}(a_w, g_w)(x-\delta N_{rw}))\right)det(U_{\Gamma}(a_r, g_r)(x))
\end{equation}
where $r$ is the root of $\mathscr{T}$.

Alternatively, if $c:E(g)\longrightarrow \{1, 2,..., k\}$ is an admissible coloring, according to the terminology presented in the Theorem (\ref{colored spectrum}) except that $g(i)$ is not necessarily an assembly of regular graphs, the equation (\ref{polinomio universal iterado}) is rewritten as

\begin{equation}\label{universal colored polynomial}
 \Phi_g(x)=\left(\prod^k_{i=2} \prod_{h\in a_c(i)}\Gamma_{g_h}(x-\delta N_h) \det(U_{\Gamma}(g_h(i+1), h)(x-\delta N_h))\right)\det(U_{\Gamma}(g(2), g/g(2))(x))
\end{equation}

Iteration of equation (\ref{spectrauni}) is possible if we assume that for every $w\in (\mathrm{Iv}(\mathscr{T})-\{r\})$, the graph $g_{\ell_w}$ is regular and $q_w=(\alpha+\delta)\mathrm{reg}(g_{\ell_w})+\beta+\gamma |\ell_w|$ is a simple eigenvalue of $U(g_{\ell_w})$. In fact, we have the following proposition.

\begin{proposition}\label{autprin} If $g$ is a regular graph with $n$ vertices and $q=(\alpha+\delta)\mathrm{reg}(g)+\beta+\gamma n$, then $q$ is an eigenvalue of $U(g)$. Moreover, if $\alpha \gamma >0$ or $\gamma=0$ with $g$ connected, then $q$ is a simple eigenvalue of $U(g)$.
\end{proposition}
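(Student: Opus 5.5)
The plan is to diagonalize $U(g)$ directly, using the fact that for a regular graph the matrices $A(g)$, $D(g)$, $I$ and $J$ can be simultaneously diagonalized. Write $k=\mathrm{reg}(g)$. Then $D(g)=kI$, so
\[U(g)=\alpha A(g)+(\beta+\delta k)I+\gamma J .\]

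First step: $q$ is an eigenvalue. Since every row of $A(g)$ has exactly $k$ ones, $A(g)1_{n\times 1}=k1_{n\times 1}$. Also $J1_{n\times 1}=n1_{n\times 1}$. Hence
\[U(g)1_{n\times 1}=(\alpha k+\beta+\delta k+\gamma n)1_{n\times 1}=q\,1_{n\times 1}.\]

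Second step: the remaining spectrum. Since $A(g)$ is symmetric with $A(g)1_{n\times 1}=k1_{n\times 1}$, it leaves the orthogonal complement $1_{n\times 1}^{\perp}$ invariant. So we can pick an orthonormal eigenbasis $u_1=\frac{1}{\sqrt n}1_{n\times 1},u_2,\dots,u_n$ of $A(g)$, with eigenvalues $k=\lambda_1,\lambda_2,\dots,\lambda_n$ and $u_i\perp 1_{n\times 1}$ for $i\ge 2$. For $i\geq 2$ we have $Ju_i=0$, so
\[U(g)u_i=(\alpha\lambda_i+\beta+\delta k)u_i .\]
Thus
\[\sigma(U(g))=q\cdot\prod_{i=2}^n(\alpha\lambda_i+\beta+\delta k),\]
and $q$ is simple exactly when $\alpha\lambda_i+\beta+\delta k\neq q$ for all $i\ge 2$. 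This condition is equivalent to
\[\alpha(\lambda_i-k)\neq\gamma n,\qquad\text{i.e.}\qquad \lambda_i\neq k+\frac{\gamma n}{\alpha}\quad(\alpha\neq 0).\]

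Third step: the two cases.
\begin{itemize}
\item If $\alpha\gamma>0$, then $\gamma n/\alpha>0$, so the excluded value $k+\gamma n/\alpha$ exceeds $k$. Every adjacency eigenvalue of a $k$-regular graph satisfies $|\lambda_i|\le k$: the spectral radius is at most the maximum row sum. So no $\lambda_i$ can equal it, and $q$ is simple. No connectivity is needed here.
\item If $\gamma=0$ and $g$ is connected, the condition becomes $\lambda_i\neq k$ for $i\ge2$. This says $k$ is a simple eigenvalue of $A(g)$. That holds by Perron--Frobenius for the irreducible nonnegative matrix $A(g)$. Alternatively, an eigenvector for $k$ attains its maximum entry at some vertex, the averaging identity forces all neighbours to share that value, and connectivity spreads it to all vertices.
\end{itemize}

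The only mildly delicate point is this last one, the simplicity of $k$ for connected regular graphs. I will cite it as standard rather than reprove it, and note that without connectivity the multiplicity of $q$ equals the number of components.
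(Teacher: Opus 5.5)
Your proof is correct, but it takes a different route from the paper. The paper never diagonalizes $U(g)$. Instead it computes the quadratic form and obtains, for $k$-regular $g$,
\[y^t(qI_n-U(g))y=\alpha\sum_{\{i,j\}\in E(g)}(y_i-y_j)^2+\gamma\sum_{\{i,j\}\in E(K_n)}(y_i-y_j)^2,\]
that is, $qI_n-U(g)=\alpha L(g)+\gamma L(K_n)$. Under either hypothesis the two sums cannot cancel, so the form vanishes only on constant vectors. Every $q$-eigenvector makes the form vanish, so the $q$-eigenspace is spanned by $1_{n\times 1}$, and symmetry of $U(g)$ upgrades geometric multiplicity one to algebraic multiplicity one. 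That argument handles both cases with a single identity. It needs neither the bound $|\lambda_i|\le k$ nor Perron--Frobenius: connectivity enters only through the elementary fact that a vector constant across every edge of a connected graph is constant. The identity also shows $qI_n-U(g)$ is semidefinite, so $q$ is the largest or smallest eigenvalue of $U(g)$. Your simultaneous diagonalization, using an orthonormal eigenbasis of $A(g)$ containing $\frac{1}{\sqrt n}1_{n\times 1}$, yields more. It gives the full spectrum $\sigma(U(g))=q\cdot\prod_{i\ge 2}(\alpha\lambda_i+\beta+\delta k)$ and the exact criterion that $q$ is simple if and only if $\gamma n/\alpha\notin\{\lambda_i-k : i\ge 2\}$. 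From this it is immediate, for example, that simplicity also holds when $\gamma n/\alpha<-2k$, and that for $\gamma=0$ the multiplicity of $q$ equals the number of components. The price is that you import the spectral-radius bound and the simplicity of $k$ for connected regular graphs rather than deriving them.
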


\begin{proof}
The quadratic form of $U(g)$ is given by $y^t U(g) y$, where $y=(y_1, y_2,..., y_n)$ is the vector of variables. From the definition of $U(g)$, it follows that
\[y^t U(g) y=\alpha\left(\sum_{\{i,j\}\in E(g)}2y_iy_j\right)+\delta\left(\sum_{\{i, j\}\in E(g)}y^2_i+y^2_j\right)+\gamma\left(\sum^n_{i=1}y_i\right)^2 +\beta \left(\sum^n_{i=1} y^2_i\right)\]
\[y^t qI_n y = \alpha\left(\sum_{\{i, j\}\in E(g)} y^2_i + y^2_j\right) + \beta \left(\sum^n_{i=1} y^2_i\right) +\gamma n \left(\sum^n_{i=1} y^2_i\right) + \delta \left(\sum_{\{i, j\}\in E(g)} y^2_i + y^2_j\right)\]
Subtracting both equations yields
\[y^t (qI_n-U(g)) y=\alpha\left(\sum_{\{i, j\}\in E(g)} (y_i - y_j)^2\right) + \gamma\left(\sum_{\{i, j\}\in E(K_n)} (y_i - y_j)^2\right)\]
where $K_n$ is the complete graph with $n$ vertices. From this equation and the assumptions about $\alpha, \gamma$, and $g$, it follows that $y^t(qI_n-U(g))y=0$ if and only if $y_1=y_2=\cdots =y_n$. Since $U(g)$ is symmetric, it is diagonalizable, and therefore $q$ is an eigenvalue of $U(g)$ with geometric multiplicity equal to its algebraic multiplicity, which in this case is 1. That is, $q$ is a simple eigenvalue of $U(g)$.
\end{proof}

Again consider the factorization $g=\bigvee \mathscr{T}$. If for each $w\in (\mathrm{Iv}(\mathscr{T})-\{r\})$ the graph $g_{\ell_w}$ satisfies the premises of Proposition \ref{autprin}, then following reasoning analogous to the proof of Theorem \ref{Laplacian spectrum iterated}, it is verified that by iterating equation (\ref{spectrauni}) we obtain
\begin{equation}
\sigma(U(g))=\left(\prod_{w\in \mathrm{Iv}(\mathscr{T})\atop w\neq r} \mu_{\delta N_{rw}}(\sigma(U(a_w, g_w)))\right)\sigma(U(a_r, g_r))
\end{equation}
where $r$ is the root of the Schröder tree $\mathscr{T}$ and $\mu_{\delta N_{rw}}(\sigma(U(a_w, g_w)))$ is the word or monomial resulting from decreasing the multiplicity of $q_w=(\alpha+\delta)\mathrm{reg}(g_{\ell_w})+\beta+\gamma |\ell_w|$ by one when applicable and then adding the value $\delta N_{rw}$ to each variable of $\sigma(U(a_w, g_w))$.

If $c:E(g)\longrightarrow \{1, 2,..., k\}$ is an admissible coloring such that for each $i\geq2$ and $h\in a_c(i)$ the graph $g_h=\displaystyle{\bigvee_h g_h(i+1)}$ satisfies the conditions of Proposition \ref{autprin}, then

\begin{equation}
\sigma(U(g))=\left(\prod^k_{i=2}\prod_{h\in a_c(i)} \mu_{\delta N_h}(\sigma(U(g_h(i+1), h)))\right)\sigma(U(g(2), g/g(2)))
\end{equation}

In future work, we will examine whether the structure of other examples of operads defined on graphs determines factorizations of matrices, polynomials, and the spectra of these matrices, provided these matrices depend on the operadic product. Undoubtedly, Schröder trees are the combinatorial structure that describes generalized factorizations. The connection between spectral graph theory and the theory of operads is fundamental for the development of research in mathematics and its related areas, which contributes to the training of future scientists, as well as higher education students.

\bibliographystyle{plain}
\bibliography{graph}

\end{document}